\documentclass[11pt]{article}
\usepackage[T1]{fontenc}
\usepackage[utf8]{inputenc}
\usepackage[english]{babel}
\usepackage{amsmath,amssymb,amsthm}
\usepackage[a4paper,margin=2.5cm]{geometry}
\usepackage[unicode]{hyperref}
\usepackage{enumitem}

\newtheorem{theorem}{Theorem}
\newtheorem{lemma}{Lemma}
\newtheorem{proposition}{Proposition}
\newtheorem{fact}{Fact}
\newtheorem{cor}{Corollary}
\theoremstyle{definition}

\newtheorem{example}{Example}
\newtheorem{question}{Question}
\theoremstyle{remark}
\newtheorem{remark}{Remark}

\newcommand{\R}{\mathbb R}
\newcommand{\Ls}{L'_\sigma}
\newcommand{\spn}{\operatorname{span}}

\title{Locally convex spaces\\ with hemicompact weak$^*$ dual}
\author{Evgenii Reznichenko\thanks{Department of General Topology and
Geometry, Faculty of Mechanics and Mathematics, M.\,V.~Lomonosov Moscow
State University, Leninskie Gory~1, Moscow, 119991, Russia; Shenzhen
MSU-BIT University, Shenzhen, China. E-mail: \texttt{erezn@inbox.ru}}}
\date{}

\begin{document}
\maketitle

\begin{abstract}
For a locally convex space $L$ let $\Ls$ denote its dual endowed with the
weak$^*$ topology $\sigma(L',L)$. We prove that $\Ls$ is hemicompact if
and only if $L$ with the Mackey topology $\tau(L,L')$ is metrizable and
barrelled, and if and only if $\Ls$ is $\sigma$-compact and
quasi-complete. The key step is that a hemicompact $\Ls$ is
quasi-complete. The Mackey topology $\tau(L,L')$ is metrizable if and
only if $L'$ is a countable union of absolutely convex $w^*$-compact sets;
spaces with normable barrelled Mackey topology are characterized
similarly. As a consequence, a metrizable (in particular, normed) space
$L$ is barrelled if and only if $\Ls$ is hemicompact. For function spaces
we obtain: $(C_k(X))'_\sigma$ is hemicompact if and only if $X$ is
hemicompact, and $(C_p(X))'_\sigma$ is hemicompact if and only if $X$ is
countable and all compact subsets of $X$ are finite.
\end{abstract}

\noindent\textbf{Keywords:} weak$^*$ dual, hemicompact space, Mackey
topology, barrelled space, quasi-complete space, $\sigma$-compact space,
space of continuous functions, $C_p(X)$, $C_k(X)$.

\smallskip
\noindent\textbf{MSC 2020:} Primary 46A03, 46A08; Secondary 54D45,
46E10, 54C35.

\section{Introduction}

Let $(K_n)_{n\in\mathbb N}$ be a sequence of compact subsets of a
topological space $X$. The space $X$ is said to be \emph{hemicompact with
respect to} $(K_n)$ if every compact set $K\subseteq X$ is contained in
some $K_n$; $X$ is \emph{hemicompact} if it is hemicompact with respect
to some sequence of compact sets. A Tychonoff space $X$ is hemicompact if
and only if the space $C_k(X)$ of continuous functions on $X$ with the
compact-open topology is metrizable (Arens~\cite{Arens}; see also
\cite[Theorem~4.4.2 and the remark after it]{McCoyNtantu}, \cite[Theorem~5.8.5]{NB}, \cite[10.2.1]{PCB}).

Let $L$ be a Banach space. By the Banach--Alaoglu theorem the balls
$nB_{L'}$ are compact in $\Ls$, and by the Banach--Steinhaus theorem
$\sigma(L',L)$-bounded (in particular, compact) subsets of $L'$ are norm
bounded. Hence $\Ls$ is hemicompact with respect to the sequence of balls
$(nB_{L'})$. This is a folklore fact: it is stated without proof in
\cite[Example~1.3(c)]{KampoukosMercourakis2013} and proved in
\cite[Proposition~5.6]{LRS-TVS} (see also Remark~5.7 there).

For a normed space $L$ the property ``every $\sigma(L',L)$-bounded subset
of $L'$ is norm bounded'' characterizes barrelledness of $L$: this
follows from the fact that the equicontinuous subsets of $L'$ are
precisely the norm bounded ones and from
\cite[Ch.~IV, \S\,2, Theorem~1]{Bourbaki} together with the bipolar
theorem.\footnote{By Webb and
Saxon--Wilansky, it even suffices to consider $w^*$-convergent sequences
\cite[Proposition~4.1]{Webb}, \cite[Theorem~1.5]{SW}.} Therefore, if $L$
is a barrelled normed space, then $\Ls$ is hemicompact (with respect to
the balls $nB_{L'}$). In fact, hemicompactness of $\Ls$
characterizes barrelledness of $L$.

\begin{theorem}\label{thm:normed}
For a normed space $L$ the following are equivalent:
\begin{enumerate}[label=\textup{(\alph*)}]
\item $L$ is barrelled;
\item $\Ls$ is hemicompact with respect to the sequence of balls
$(nB_{L'})$;
\item $\Ls$ is hemicompact;
\item every compact subset of $\Ls$ is norm bounded;
\item every $\sigma(L',L)$-null sequence in $L'$ is norm bounded.
\end{enumerate}
\end{theorem}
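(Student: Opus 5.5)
The plan is to prove the cycle (a)$\Rightarrow$(b)$\Rightarrow$(c)$\Rightarrow$(d)$\Rightarrow$(e)$\Rightarrow$(a).

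\emph{(a)$\Rightarrow$(b).} This is the argument already given in the introduction. The balls $nB_{L'}$ are $w^*$-compact by Banach--Alaoglu; completeness of $L$ is not needed for this. Barrelledness makes every $w^*$-bounded set equicontinuous, hence norm bounded. So every $w^*$-compact set lies in some $nB_{L'}$.

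\emph{(b)$\Rightarrow$(c)} is trivial.

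\emph{(c)$\Rightarrow$(d).} Let $\Ls$ be hemicompact with respect to compact sets $(K_n)$. I first show that each $K_n$ is norm bounded; then any compact $K\subseteq K_n$ is norm bounded as well. Suppose instead that some $K_n$ is not norm bounded. Since $K_n$ is compact, it is $w^*$-bounded, that is, pointwise bounded on $L$. The difficulty is that $L$ is incomplete, so Banach--Steinhaus does not apply directly.

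I would use a diagonal construction. Since each $K_n$ is $w^*$-bounded, so is $\Gamma_n=\overline{\mathrm{absconv}}^{w^*}(K_1\cup\dots\cup K_n)$. If every $K_n$ were norm bounded we would be done, so assume some $K_m$ is not norm bounded. For each $n$, pick $f_n\in K_m$ with $\|f_n\|\ge n^3$. Then $\|f_n/n\|\ge n^2\to\infty$. Moreover, for every $x\in L$ we have $|f_n(x)/n|\le \sup_{K_m}|f(x)|/n\to0$, so $g_n=f_n/n$ is a $w^*$-null sequence. Hence $\{0\}\cup\{g_n\}$ is $w^*$-compact and must lie in some $K_k$, but that only gives unbounded elements in $K_k$, which is no contradiction.

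So I need a better device. Using unboundedness, I choose $x_n\in L$ with $\|x_n\|\le 2^{-n}$ and a scalar multiple $h_n$ of an element of $K_{k_n}$, chosen inductively so that $h_n$ escapes $K_n$ while the sequence $(h_n)$ is still $w^*$-null. The natural choice is $h_n=f/c$, where $f\in K_m$ has huge norm and $c$ is large. Then $h_n$ is small pointwise but still has large norm; it escapes $K_n$ once $\|h_n\|>\sup_{K_n}\|\cdot\|$. This presupposes that $K_n$ is bounded, which is circular.

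Instead, I argue via $w^*$-null sequences, which is exactly (e), and prove (c)$\Rightarrow$(e) directly. Given a $w^*$-null sequence $(g_j)$ with $\|g_j\|\to\infty$, form the sequences $(t\,g_j)_j$ for $t\in\mathbb N$. For each $t$ the set $S_t=\{0\}\cup\{t g_j\}_j$ is $w^*$-compact. Diagonalize: choose $j_t$ increasing with $t\,g_{j_t}\notin K_t$; this is possible if no $K_t$ absorbs the tails. The sequence $(t\,g_{j_t})$ is then $w^*$-null, provided the $j_t$ grow fast enough. Since $L$ is normed, I can arrange this using a countable dense set in the span of the relevant points, or more simply by requiring $|g_{j_t}(x)|\le 1/t^2$ on finitely many points only. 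That is not enough for all $x\in L$.

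\emph{Main obstacle.} Securing pointwise convergence on all of $L$ is the key step, and it is where I expect the real work. I would try to use that $w^*$-null sequences are norm-bounded on a dense subspace of completions, via Banach--Steinhaus on the Banach disc generated by the $x$'s. Then (d)$\Rightarrow$(e) is immediate, since a $w^*$-null sequence together with $0$ is compact.

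\emph{(e)$\Rightarrow$(a)} is the Webb / Saxon--Wilansky result cited in the introduction.
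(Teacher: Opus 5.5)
Your cycle has a genuine gap at its only nontrivial link: nothing in the proposal proves that (c) implies any of (a), (d), (e). The links (a)$\Rightarrow$(b)$\Rightarrow$(c), (d)$\Rightarrow$(e) and (e)$\Rightarrow$(a) are fine. Your discussion of (c)$\Rightarrow$(d) and (c)$\Rightarrow$(e), however, ends with the obstacle you yourself name still open.

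The diagonal construction fails for the reason you identify. Imposing smallness at finitely many points at each step cannot make $(c_t g_{j_t})$ $w^*$-null on all of $L$, and the escape condition $t g_{j_t}\notin K_t$ is only asserted conditionally. The proposed repair via Banach--Steinhaus on Banach discs cannot work either. It never uses hemicompactness, whereas the statement it is meant to yield is false without it: for $L=(\varphi,\|\cdot\|_\infty)$ every Banach disc is finite-dimensional, and $(ne_n)$ is a $w^*$-null sequence in $L'=\ell_1$ with $\|ne_n\|_1=n$.

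The missing idea is Lemma~\ref{lem:seq} (which gives Corollary~\ref{cor:qc}): measure smallness by the gauge of the polar of a barrel.

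Suppose $L$ is not barrelled, and let $D$ be a barrel that is not a norm neighbourhood of zero. In your (c)$\Rightarrow$(e) setting, $D=\{g_j\}_\circ$ is such a barrel.

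\begin{enumerate}[label=\textup{(\roman*)}]
\item $D^\circ$ is not $w^*$-compact. Otherwise $D=(D^\circ)_\circ$ would be a $\tau(L,L')$-neighbourhood, and $\tau(L,L')$ is the norm topology. Hence the $\sigma(L^*,L)$-closure $D^{\circ*}$ of $D^\circ$ in the algebraic dual, which is compact, is not contained in $L'$.
\item Let $q$ be the gauge of $D^\circ$ on $\spn D^\circ$. Since $|h(x)|\le t\,q(h)$ for $x\in tD$ and $D$ is absorbing, $q(h_k)\to0$ implies $h_k\to0$ in $\Ls$. This is exactly the control on all of $L$ that your diagonal argument lacked.
\item No compact $K_N\subseteq\Ls$ contains $\varepsilon D^\circ$ for any $\varepsilon>0$. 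Indeed, $K_N$ is compact, hence closed, in $L^*$, so it would contain $\varepsilon D^{\circ*}\not\subseteq L'$.
\item Take $(K_n)$ increasing, and choose $h_k\in\spn D^\circ\setminus K_k$ with $q(h_k)<1/k$, which (iii) permits. Then the compact set $\{0\}\cup\{h_k\}$ lies in no $K_m$.
\end{enumerate}

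This proves (c)$\Rightarrow$(a) by contraposition and closes your cycle. The paper packages the same argument as: hemicompact $\Rightarrow$ $\Ls$ quasi-complete, hence the Mackey space $L$ is barrelled, via Theorem~\ref{thm:metr} and Theorem~\ref{thm:main}.
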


Thus, for normed spaces, hemicompactness of $\Ls$, whenever it holds, is
realized by the balls. This result extends to metrizable spaces.

\begin{theorem}\label{thm:metr}
Let $L$ be a metrizable locally convex space and
$U_1\supseteq U_2\supseteq\dots$ a base of absolutely convex closed
neighbourhoods of zero. The following are equivalent:
\begin{enumerate}[label=\textup{(\alph*)}]
\item $L$ is barrelled;
\item $\Ls$ is hemicompact with respect to the sequence of polars
$(U_n^\circ)$;
\item $\Ls$ is hemicompact;
\item $\Ls$ is quasi-complete;
\item every $\sigma(L',L)$-null sequence in $L'$ is equicontinuous;
\item every compact subset of $\Ls$ is $\beta(L',L)$-bounded, i.e., $L$
is g-barrelled in the sense of~\cite{CMT99} (every compact subset of
$\Ls$ is equicontinuous).
\end{enumerate}
\end{theorem}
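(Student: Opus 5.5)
The plan is to prove the cycle (a)$\Rightarrow$(b)$\Rightarrow$(c)$\Rightarrow$(d)$\Rightarrow$(a), and then attach (e) and (f) to it. Recall that for a metrizable $L$ with base $(U_n)$, a subset of $L'$ is equicontinuous if and only if it lies in some $U_n^\circ$. Each $U_n^\circ$ is absolutely convex and, by Alaoglu--Bourbaki, $\sigma(L',L)$-compact.

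\emph{The easy implications.}

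For (a)$\Rightarrow$(b): in a barrelled space every $\sigma(L',L)$-bounded set is equicontinuous. A compact subset of $\Ls$ is bounded, so it lies in some $U_n^\circ$.

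(b)$\Rightarrow$(c) is trivial.

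For (a)$\Rightarrow$(e): a null sequence is $\sigma(L',L)$-bounded, hence equicontinuous.

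For (f)$\Rightarrow$(a): consider a barrel $T$ in $L$. Its polar $T^\circ$ is $\sigma(L',L)$-bounded (since $T$ is absorbing), absolutely convex and $w^*$-closed. If we knew $T^\circ$ were $w^*$-compact, then (f) would make it equicontinuous, and then $T=T^{\circ\circ}$ would be a neighbourhood of zero.

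\emph{From quasi-completeness to barrelledness.}

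For (d)$\Rightarrow$(a): by quasi-completeness, $T^\circ$ is a closed bounded subset of $\Ls$ and hence complete. It is also precompact, since $\sigma(L',L)$-bounded sets are precompact in the weak$^*$ topology (their image in $\R^L$ lies in a product of bounded intervals). So $T^\circ$ is compact.

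Next we need that compact subsets of $\Ls$ are equicontinuous when $L$ is metrizable. I would show that each compact absolutely convex $K\subseteq L'$ is equicontinuous, using metrizability. Suppose $K\not\subseteq nU_n^\circ$ for all $n$. Pick $f_n\in K$ and $x_n\in U_n$ with $|f_n(nx_n)|>n$, i.e.\ $|f_n(x_n)|>1$; this is unbounded behaviour on a null sequence. Then $y_n=\sqrt n\,x_n$ need not be null, so instead choose $x_n\in U_n$ with $|f_n(x_n)|>n$, possible since $K\not\subseteq n U_n^\circ$. Then $x_n\to0$, and $\{x_n\}\cup\{0\}$ is compact, so its absolutely convex closed hull $B$ is a Banach disk (in a metrizable space, the closed absolutely convex hull of a null sequence is compact only after completion; I would instead work in the completion $\hat L$, whose dual is $L'$, and there it is a compact disc). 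Now $K$ is a $w^*$-compact absolutely convex set, so $K$ is $\sigma(L',L)$-bounded and hence $\sigma(L',L_B)$-bounded, where $L_B$ is the Banach space spanned by $B$. Banach--Steinhaus on the Banach space $L_B$ gives $\sup_{f\in K,\,x\in B}|f(x)|<\infty$. This contradicts $|f_n(x_n)|>n$. Hence compact discs are equicontinuous, and a general compact set is handled by its closed absolutely convex hull, which is compact by quasi-completeness or, in (f), by a Krein-type argument. This also yields (d)$\Rightarrow$(f) and the implication (c)+(d)$\Rightarrow$(a).

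\emph{From (e) to (a).}

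For (e)$\Rightarrow$(a): take a barrel $T$ that is not a neighbourhood of zero. Then $T^\circ\not\subseteq nU_n^\circ$, so pick $f_n\in T^\circ/ n$ (scaled) with $f_n\notin U_n^\circ$. Then $f_n\to0$ weak$^*$, because $T^\circ$ is weak$^*$ bounded. But $(f_n)$ is not contained in any $U_m^\circ$, which contradicts (e). This is routine.

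\emph{The main obstacle: (c)$\Rightarrow$(d).}

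This is the key step named in the abstract: a hemicompact $\Ls$ is quasi-complete. Let $(K_n)$ witness hemicompactness, and replace $K_n$ by the compact absolutely convex sets $D_n=\overline{\mathrm{acx}}(K_n)$. One must check these are compact; I would instead replace them by the equicontinuous sets $U_m^\circ$ once hemicompactness is shown to be equivalent to (b).

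The plan is to show that every $w^*$-null sequence lies in some $K_n$, and then invoke (e). Given a null sequence $(f_k)$, its union with $\{0\}$ is compact, so it lies in some $K_n$. The same holds for every rescaled sequence $(\lambda_k f_k)$ with $\lambda_k\to\infty$ slowly enough that it stays null. A diagonal argument then applies. If $(f_k)\not\subseteq U_m^\circ$ for every $m$, pick subsequences $g_m=f_{k_m}\notin m^2U_m^\circ$. The sequence $(m g_m)$ is still $w^*$-null if the $k_m$ grow fast enough, which is arranged by choosing each $k_m$ inductively against the countably many compacta. Suppose $(m g_m)$ were contained in a compact $K_N$. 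We need $K_N$ to be equicontinuous, and this is the crux.

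To get equicontinuity of a single compact $K\subseteq\Ls$, I would use that $K$ is a compact, hence Baire, set in $\Ls$. Write $K=\bigcup_m (K\cap mU_m^\circ)$, a union of closed sets. By Baire, some $K\cap mU_m^\circ$ has nonempty interior in $K$. Combining this with hemicompactness, which guarantees that finitely many translates cover $K$, would give $K\subseteq$ a finite sum of polars, hence equicontinuity. I expect this Baire/covering step to be the real difficulty. Once it is done, (c)$\Rightarrow$(f)$\Rightarrow$(a) follows, and then (a)$\Rightarrow$(d), because barrelled metrizable spaces have equicontinuous bounded sets, which are $w^*$-compact.
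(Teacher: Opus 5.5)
The gap is where you expected it, in (c)$\Rightarrow$(d), and the route you sketch cannot be repaired. You try to show that a compact $K_N\subseteq\Ls$ is equicontinuous. The Baire/covering sketch never really uses hemicompactness. ``Finitely many translates cover $K$'' is just compactness, and a relatively open piece of $K$ lying in $mU_m^\circ$ says nothing about the other points of $K$, which is not homogeneous. So the argument would prove that every compact subset of $\Ls$ is equicontinuous for every metrizable $L$. That is condition (f), i.e.\ barrelledness, and it is false in general: for $L=(\varphi,\|\cdot\|_\infty)$ the set $\{0\}\cup\{ne_n\}$ is $\sigma(\ell_1,\varphi)$-compact but norm unbounded. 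For the same reason, your sentence ``compact subsets of $\Ls$ are equicontinuous when $L$ is metrizable'' is false. Only absolutely convex compact sets are, by the Mackey property. There is also no ``Krein-type'' argument making closed absolutely convex hulls compact in $\Ls$ under (f); for (f)$\Rightarrow$(a) simply use (f)$\Rightarrow$(e)$\Rightarrow$(a). The diagonal step is unfounded as well. A subsequence of a $w^*$-null sequence cannot in general be rescaled by $m$ and remain null: for $e_k\in\ell_1=c_0'$ no sequence $(me_{k_m})$ is $w^*$-null, by Banach--Steinhaus. Choosing $k_m$ ``against countably many compacta'' gives no control over pointwise convergence at the points of $L$.

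The missing idea is to make a null sequence escape the sets $K_n$ themselves, not the polars $U_m^\circ$. If $L$ is not barrelled, take a barrel $T$ that is not a neighbourhood of zero. Then $T^\circ$ is closed and bounded in $\Ls$, and it is not compact, since otherwise $T=(T^\circ)_\circ$ would be a $\tau(L,L')$-neighbourhood and $L$ is Mackey. Hence $\frac1nT^\circ\not\subseteq K_n$ for every $n$, else the closed set $T^\circ$ would lie in the compact set $nK_n$. Choose $h_n\in\frac1nT^\circ\setminus K_n$. Then $h_n\to0$ in $\Ls$, and $\{0\}\cup\{h_n\}$ is a compact set contained in no $K_n$. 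This is the paper's Lemma~\ref{lem:seq} and Corollary~\ref{cor:qc}, which give (c)$\Rightarrow$(a) through Theorem~\ref{thm:main}.

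The rest of your plan is sound. Your diagonal proof of (e)$\Rightarrow$(a) is correct and even avoids the Webb--Saxon--Wilansky theorem the paper cites. In (d)$\Rightarrow$(a) you need only the standard fact that metrizable spaces are Mackey. Your attempted reproof of that fact applies Banach--Steinhaus on the wrong side: $\sigma(L',L)$-boundedness of $K$ says nothing about the points of $L_B\subseteq\widehat L$. The working version applies it on the Banach space $L'_K$ to the pointwise bounded family $\{x_n\}$.
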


If $\Ls$ is hemicompact, then, as we shall see, the Mackey topology
$\tau(L,L')$ is metrizable. Therefore, for an arbitrary $L$ it is natural
to determine first when the Mackey topology is metrizable.

\begin{proposition}\label{prop:mackey}
For a locally convex space $L$ the following are equivalent:
\begin{enumerate}[label=\textup{(\arabic*)}]
\item the Mackey topology $\tau(L,L')$ is metrizable;
\item there is a metrizable locally convex topology on $L$ compatible
with the duality $\langle L,L'\rangle$;
\item $L'$ is the union of countably many absolutely convex compact
subsets of $\Ls$;
\item the family of absolutely convex compact subsets of $\Ls$ has a
countable cofinal subfamily $(A_n)$: every absolutely convex compact set
is contained in some $A_n$.
\end{enumerate}
\end{proposition}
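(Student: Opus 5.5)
The plan is to prove the cycle $(1)\Rightarrow(2)\Rightarrow(3)\Rightarrow(4)\Rightarrow(1)$, using only the Mackey--Arens theorem, the bipolar theorem and Banach--Alaoglu. Recall that $\tau(L,L')$ is the topology of uniform convergence on the absolutely convex $\sigma(L',L)$-compact subsets of $L'$. Its neighbourhoods of zero are the polars $A^\circ$ of such sets $A$, and every compatible topology is coarser than it.

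The first two implications are short. $(1)\Rightarrow(2)$ holds because $\tau(L,L')$ is itself compatible with the duality. For $(2)\Rightarrow(3)$, let $\mathcal T$ be a metrizable compatible topology with a decreasing base $U_1\supseteq U_2\supseteq\dots$ of absolutely convex closed neighbourhoods of zero. Each polar $U_n^\circ$ is absolutely convex and, by Banach--Alaoglu, $\sigma(L',L)$-compact. Since every $f\in L'$ is $\mathcal T$-continuous, $|f|\le 1$ on some $U_n$, so $f\in U_n^\circ$. Hence $L'=\bigcup_n U_n^\circ$.

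The step $(3)\Rightarrow(4)$ is the main obstacle, a Baire-type argument. Suppose $L'=\bigcup_n B_n$ with each $B_n$ absolutely convex and $w^*$-compact. Replacing $B_n$ by the absolutely convex hull of $B_1\cup\dots\cup B_n$ (still compact, as the continuous image of a compact set under a finite sum map composed with scalar multiplication by the unit disc), we may assume $B_n$ increasing. Put $A_n=nB_n$, which is again increasing. Now let $K$ be an absolutely convex $w^*$-compact set; we must show $K\subseteq A_n$ for some $n$. The set $K$ is a compact Hausdorff space covered by the closed sets $K\cap B_n$, so by the Baire category theorem some $K\cap B_m$ has nonempty interior in $K$. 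There are then $x_0\in K$ and a relatively open $V\ni x_0$ with $V\subseteq B_m$. For any $y\in K$, the point $x_0+t(y-x_0)$ lies in $K$ by convexity and tends to $x_0$ as $t\to0$ in the $w^*$ topology, since it is an affine path. Compactness of $K$ gives the uniform version: choose $t$ small enough that $x_0+t(K-x_0)\subseteq V$, which works because the map $(t,y)\mapsto x_0+t(y-x_0)$ is continuous on $[0,1]\times K$ and equals $x_0$ at $t=0$. Then $tK-tx_0+x_0\subseteq B_m$, so $tK\subseteq B_m+B_m-\dots$. More precisely, $tK\subseteq B_m - x_0 + tx_0 \subseteq B_m + (1-t)(-x_0)$. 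Since $x_0\in K\subseteq L'$, we have $x_0\in B_j$ for some $j$, so $-(1-t)x_0\in B_j$ by absolute convexity. Hence $tK\subseteq B_m+B_j\subseteq 2B_k$ with $k=\max(m,j)$, and $K\subseteq (2/t)B_k\subseteq A_N$ for $N\ge\max(k,2/t)$. This gives (4).

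For $(4)\Rightarrow(1)$, the polars $A_n^\circ$ form a countable family of $\tau(L,L')$-neighbourhoods of zero. Every basic neighbourhood $K^\circ$ contains some $A_n^\circ$, since $K\subseteq A_n$. Thus $\{A_n^\circ\}$ is a countable base at zero for the Hausdorff locally convex topology $\tau(L,L')$, which is therefore metrizable. Passing to finite intersections makes the base decreasing if desired.
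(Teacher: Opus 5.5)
Your proof is correct. Steps (1)$\Rightarrow$(2)$\Rightarrow$(3) and (4)$\Rightarrow$(1) are exactly the paper's; your (3)$\Rightarrow$(4) runs the Baire argument in a different space.

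\textbf{The paper's route.} It uses Fact~\ref{f:known}(7): $L'_K=\spn K$, normed by the Minkowski functional of $K$, is a Banach space whose norm topology is finer than $\sigma(L',L)$. Hence the sets $nB_n\cap L'_K$ are norm-closed and cover $L'_K$. An interior point then gives a translate $x+\varepsilon K\subseteq kB_k$ immediately, because $K$ is the unit ball of $L'_K$. Symmetry and convexity then give $\varepsilon K\subseteq kB_k$.

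\textbf{Your route.} You apply Baire to $K$ itself, as a compact Hausdorff space in the weak$^*$ topology, covered by the closed sets $K\cap B_n$. A relatively weak$^*$-open piece of $K$ is not a homothetic copy of $K$, so you need the extra step: the tube lemma for the continuous map $(t,y)\mapsto(1-t)x_0+ty$ on $[0,1]\times K$. That step is valid, and so is your check that the absolutely convex hull of $B_1\cup\dots\cup B_n$ is compact.

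\textbf{What each buys.} Your version is more self-contained. It avoids the completeness of $L'_K$, i.e.\ the Banach-disk property of absolutely convex compact sets, and uses only Baire for compact spaces. The paper's version is shorter once Fact~\ref{f:known}(7) is available. It also works verbatim for any Banach disk, compact or not, and it is the template the paper reuses in Lemma~\ref{lem:KL} and Theorem~\ref{thm:normable}. Your argument would work in those places too, since there the relevant sets are compact.

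\textbf{A small simplification.} You do not need to place $x_0$ in some $B_j$. Reflect $(1-t)x_0+tK\subseteq B_m$ through the origin; since $K$ and $B_m$ are symmetric, this gives $-(1-t)x_0+tK\subseteq B_m$. Averaging the two inclusions yields $tK\subseteq B_m$ directly.
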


However, metrizability of $\tau(L,L')$ is not sufficient for
hemicompactness of $\Ls$ (Example~\ref{ex:phi}); barrelledness is also
needed. We obtain the following complete characterization.

\begin{theorem}\label{thm:main}
For a locally convex space $L$ the following are equivalent:
\begin{enumerate}[label=\textup{(\roman*)}]
\item $\Ls$ is hemicompact;
\item $(L,\tau(L,L'))$ is metrizable and barrelled;
\item $\beta(L,L')=\tau(L,L')$ and this topology is metrizable;
\item $\Ls$ is quasi-complete and has a countable fundamental system of
bounded sets;
\item $\Ls$ is $\sigma$-compact and quasi-complete;
\item $L'$ is the union of countably many absolutely convex compact
subsets of $\Ls$, and $(L,\tau(L,L'))$ is barrelled.
\end{enumerate}
\end{theorem}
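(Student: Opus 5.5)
We prove the cycle (i)$\Rightarrow$(v)$\Rightarrow$(iv)$\Rightarrow$(iii)$\Rightarrow$(ii)$\Rightarrow$(vi)$\Rightarrow$(i), using Proposition~\ref{prop:mackey} and Theorem~\ref{thm:metr}. The standard facts we rely on are these. By Mackey--Arens, the $\tau(L,L')$-equicontinuous sets are exactly the subsets of absolutely convex $\sigma(L',L)$-compact sets. The $\beta(L,L')$-equicontinuous sets are exactly the $\sigma(L',L)$-bounded sets. Finally, $(L,\tau)$ is barrelled if and only if every $\sigma(L',L)$-bounded set is $\tau$-equicontinuous, i.e.\ $\beta(L,L')=\tau(L,L')$.

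\emph{(i)$\Rightarrow$(v).} This is the key step. Let $\Ls$ be hemicompact with respect to $(K_n)$; it is then trivially $\sigma$-compact. For quasi-completeness, we first show that $\tau(L,L')$ is metrizable. By Krein's theorem the closed absolutely convex hull of a weak$^*$ compact set is weak$^*$ compact provided $\tau(L,L')$-completeness issues do not arise; we do not assume this. Instead we argue directly. Each absolutely convex compact set lies in some $K_n$. Let $A_n$ be the union of all absolutely convex compact sets contained in $K_n$. Then $A_n$ is balanced, and its closed absolutely convex hull is contained in the closure of $\bigcup_m$ of absolutely convex compacts. This is delicate, so we plan a different route. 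For each $n$ let $C_n$ be the union of absolutely convex compact subsets of $K_n$; the sets $C_n$ are cofinal among absolutely convex compacts. The Mackey neighbourhoods $C_n^\circ$ then form a countable base, since every Mackey zero-neighbourhood contains some $A^\circ\supseteq C_n^\circ$ whenever $A\subseteq C_n$. So $\tau(L,L')$ is metrizable.

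Next, every compact $K\subseteq \Ls$ is $\tau$-equicontinuous. Using the metrizability just obtained, we apply Theorem~\ref{thm:metr} to $(L,\tau(L,L'))$, whose dual is $L'$, so its $\Ls$ is the same space. Since (c) holds, Theorem~\ref{thm:metr} gives (d), quasi-completeness, and also (a), barrelledness of $(L,\tau)$. The main obstacle is the metrizability step. If the cofinality argument above fails, I would try to show that the closed absolutely convex hull of $K_n$ is compact by embedding $\Ls$ into the completion. Here one would use that hemicompact spaces are $k$-spaces, together with Grothendieck's completeness theorem for the metrizable topology generated by the polars of the $K_n$.

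\emph{(v)$\Rightarrow$(iv).} In a quasi-complete space, closed bounded sets are complete, and the weak$^*$ topology on them is such that closed absolutely convex hulls of compacts are compact. Writing $L'=\bigcup K_n$, each $\sigma(L',L)$-bounded set $B$ has closed absolutely convex hull $D$, which is complete and hence a Banach disk. Baire's theorem, applied to $D=\bigcup (D\cap K_n)$ in a suitable sense, places $B$ inside a multiple of the hull of some $K_n$. This yields a countable fundamental system of bounded sets, namely the sets $m\,\overline{\mathrm{acx}}(K_1\cup\dots\cup K_n)$.

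\emph{(iv)$\Rightarrow$(iii).} The polars of a countable fundamental system of bounded sets give a countable base for $\beta(L,L')$, so $\beta$ is metrizable. Quasi-completeness makes each bounded set relatively compact in $\Ls$ after taking closed absolutely convex hulls, so $\beta$-equicontinuous sets are $\tau$-equicontinuous and hence $\beta=\tau$. \emph{(iii)$\Rightarrow$(ii)} follows from the facts above. \emph{(ii)$\Rightarrow$(vi)} follows from Proposition~\ref{prop:mackey}, (1)$\Rightarrow$(3). \emph{(vi)$\Rightarrow$(i)}: Proposition~\ref{prop:mackey} gives metrizability of $\tau$, and Theorem~\ref{thm:metr}, (a)$\Rightarrow$(c), applied to $(L,\tau)$ gives hemicompactness of $\Ls$.
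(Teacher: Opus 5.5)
\textbf{Gap in (i)$\Rightarrow$(v).} You correctly identify this as the key step, but you never prove it.

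Your metrizability argument only shows that $(C_n)_\circ\subseteq A_\circ$ whenever $A\subseteq C_n$. It never shows that $(C_n)_\circ$ is itself a $\tau(L,L')$-neighbourhood of zero. That would require $C_n$, a union of possibly infinitely many absolutely convex compact sets, to lie in a \emph{single} absolutely convex $\sigma(L',L)$-compact set, and $C_n\subseteq K_n$ gives no such thing. For instance, in $(\varphi,\sigma(\varphi,\varphi))$ the set $\{0\}\cup\bigcup_k[-e_k,e_k]$ is compact and is a union of absolutely convex compact sets. Yet it lies in no absolutely convex compact set, since those are finite-dimensional (Example~\ref{ex:phi}). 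Under hemicompactness the closed absolutely convex hulls of the $K_n$ do turn out to be compact, but that is exactly quasi-completeness, the thing to be proved.

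Your fallback fails as well. Hemicompact spaces need not be $k$-spaces; for example, $\omega\cup\{p\}$ with $p$ a free ultrafilter is hemicompact but not a $k$-space. In fact, by the result cited in the paper's introduction, a hemicompact $\Ls$ is a $k$-space (hence $k_\omega$) only when $\dim L<\infty$.

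\textbf{Circularity.} Invoking Theorem~\ref{thm:metr}, (c)$\Rightarrow$(d) and (c)$\Rightarrow$(a), is circular. In the paper that implication is deduced from Theorem~\ref{thm:main}, and for metrizable $\tau(L,L')$ it \emph{is} the statement that a hemicompact $\Ls$ is quasi-complete.

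\textbf{The missing idea} is Lemma~\ref{lem:seq}, which proves quasi-completeness directly, with no metrizability. Assume $K_n$ increasing, suppose $\Ls$ is not quasi-complete, and take a barrel $D$ with non-compact polar.
\begin{enumerate}
\item $D^{\circ*}$ is $\sigma(L^*,L)$-compact, and $D^{\circ*}\not\subseteq L'$.
\item The gauge $q$ of $D^\circ$ is a norm on $F=\spn D^\circ$, and $q$-null sequences are $\sigma(L',L)$-null.
\item No $K_N$ contains $\{f\in F:q(f)<1/N\}$. Otherwise $\varepsilon D^\circ\subseteq K_N$. Since $K_N$ is closed in $L^*$ and $D^{\circ*}$ is the $L^*$-closure of $D^\circ$, this gives $\varepsilon D^{\circ*}\subseteq K_N\subseteq L'$, a contradiction.
\item Hence one can pick $h_k\in F\setminus K_k$ with $q(h_k)<1/k$. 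Then $\{0\}\cup\{h_k\}$ is compact but lies in no $K_n$.
\end{enumerate}
Metrizability of $\tau(L,L')$ then comes \emph{afterwards}, through (v)$\Rightarrow$(iv)$\Rightarrow$(iii), not before.

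\textbf{The rest of your cycle is sound and matches the paper.} This includes (vi)$\Rightarrow$(i) via Theorem~\ref{thm:metr}, (a)$\Rightarrow$(c), whose proof is independent of Theorem~\ref{thm:main}. In (v)$\Rightarrow$(iv), apply Baire to the cover of the Banach space $L'_D$ by the closed absolutely convex sets $n\,B_n\cap L'_D$, where $B_n$ is the compact closed absolutely convex hull of $K_n$. Do not use $D\cap K_n$: passing from $x+\varepsilon D\subseteq kB_k$ to $\varepsilon D\subseteq kB_k$ needs convexity.
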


Theorem~\ref{thm:main} and Proposition~\ref{prop:mackey} yield
\begin{equation}\label{eq:chain}
\Ls\ \text{is hemicompact}\ \Longrightarrow\ \tau(L,L')\ \text{is
metrizable}\ \Longrightarrow\ \Ls\ \text{is}\ \sigma\text{-compact},
\end{equation}
and for quasi-complete $\Ls$ all three conditions are equivalent. In
general both implications are irreversible, and none of the conditions
``$\tau(L,L')$ is metrizable'', ``$\Ls$ is quasi-complete'', ``$\Ls$ is
$\sigma$-compact'' alone implies hemicompactness of $\Ls$:
\begin{enumerate}[label=\textup{(\alph*)}]
\item there is $L$ with metrizable $\tau(L,L')$ and non-hemicompact
$\Ls$;
\item there is $L$ with $\sigma$-compact but not quasi-complete $\Ls$ and
non-metrizable $\tau(L,L')$;
\item there is $L$ with quasi-complete but not $\sigma$-compact $\Ls$.
\end{enumerate}
For (a) one can take $L=(\varphi,\sigma(\varphi,\varphi))$
(Example~\ref{ex:phi}): $\tau(L,L')$ is metrizable and $\Ls$ is
$\sigma$-compact, but $\Ls$ is not hemicompact and hence, by
\eqref{eq:chain}, not quasi-complete. For (b), take $L=C_p([0,1])$: the
space $\Ls=L_p([0,1])$ is $\sigma$-compact, while $\tau(L,L')$ is not
metrizable (Proposition~\ref{prop:Cp}); by \eqref{eq:chain}, $\Ls$ is not
quasi-complete. For (c), take $\varphi$ with the finest locally convex
topology: $\Ls=\R^{\mathbb N}$ is complete but not $\sigma$-compact, so
by \eqref{eq:chain} $\Ls$ is not hemicompact and $\tau(L,L')$ is not
metrizable. Examples (a) and (b) show that the first and the second
implications in \eqref{eq:chain}, respectively, are irreversible.
Hemicompactness of $\Ls$ may fail also for normed $L$
(Example~\ref{ex:normed}).

Condition (ii) depends only on the duality $\langle L,L'\rangle$: for
instance, for a Banach space $F$ with the weak topology, $L=(F,w)$, the
space $\Ls=F'_{w^*}$ is hemicompact, although $L$ itself is neither
metrizable (if $\dim F=\infty$) nor barrelled.

The main new ingredient of Theorem~\ref{thm:main} appears to be
Lemma~\ref{lem:seq} and its Corollary~\ref{cor:qc}: a hemicompact $\Ls$ is
quasi-complete. Most of the other ingredients are classical. In
Proposition~\ref{prop:mackey} the equivalence (1)$\Leftrightarrow$(4) is
immediate from the description of $\tau(L,L')$ as the topology of uniform
convergence on absolutely convex weakly compact sets, and
(3)$\Rightarrow$(4) is a Baire category argument of the type used in
\cite[Theorem~2.7 and Remark~2.8]{KL22}. Condition (f) of
Theorem~\ref{thm:metr} is equivalent to (e) by the Webb and
Saxon--Wilansky criterion (Fact~\ref{f:known}(5)), so for metrizable
spaces g-barrelledness coincides with barrelledness; we include it for
the application in Remark~\ref{rem:KLq}. Corollary~\ref{cor:Ck} on
$(C_k(X))'_\sigma$ is close to \cite[Theorem~2.11]{KL22}, where it is
shown that $(C_k(X),w)$ is Eberlein--Grothendieck, i.e., $(C_k(X))'_\sigma$
is $\sigma$-compact, if and only if $X$ is hemicompact; our
Lemma~\ref{lem:Clambda} extends \cite[Theorem~2.12 and
Proposition~2.13]{KL22} from $C_k(X)$ to $C_\lambda(X)$, and
Theorem~\ref{thm:normable} is a counterpart of
\cite[Theorem~3.8]{KL22}. We have not found Theorem~\ref{thm:main} or
Proposition~\ref{prop:Lb} in the literature; for other topological
properties of weak and weak$^*$ duals (networks, the Ascoli property,
$k$-properties) see, e.g., \cite{GKKM15,Gab23} and the references
therein.
The implication (v)$\Rightarrow$(ii) for barrelled $L$ is a theorem of
K\c{a}kol and Leiderman \cite[Theorem~2.7]{KL22}; we give its proof in
the form we need (Lemma~\ref{lem:KL}). Recall also that, by
\cite[Proposition~2.1]{KL22}, $\Ls$ is $\sigma$-compact if and only if
$(L,w)$ is an Eberlein--Grothendieck space, i.e., embeds homeomorphically
into $C_p(Q)$ for some compact space $Q$.

By the Arens theorem, condition (i) of Theorem~\ref{thm:main} means that
$C_k(\Ls)$ is metrizable. The map $x\mapsto(f\mapsto f(x))$ linearly
embeds $L$ into $C_k(\Ls)$; the induced topology $\tau_c$ is the topology
of uniform convergence on compact subsets of $\Ls$, and
$\tau(L,L')\subseteq\tau_c\subseteq\beta(L,L')$. If the closed absolutely
convex hull of every compact subset of $\Ls$ is compact (for instance, if
$\Ls$ is quasi-complete), then $\tau_c=\tau(L,L')$. In particular, for
hemicompact $\Ls$ the space $(L,\tau(L,L'))$ is linearly homeomorphic to a
subspace of the metrizable space $C_k(\Ls)$ (Corollary~\ref{cor:qc}). In
general $\tau_c\ne\tau(L,L')$: for $L=C_p([0,1])$ the absolutely convex
compact subsets of $\Ls=L_p([0,1])$ are finite-dimensional
(Lemma~\ref{lem:Lpac}), so $\tau(L,L')$ is the topology of pointwise
convergence, whereas $\tau_c$ dominates uniform convergence on the
compact set $[0,1]\subseteq L_p([0,1])$.

Theorem~\ref{thm:main} describes the locally convex spaces whose Mackey
topology is metrizable and barrelled. The same method describes the
spaces with normable barrelled Mackey topology, i.e., the spaces whose
duality $\langle L,L'\rangle$ is realized by a barrelled normed space.

\begin{theorem}\label{thm:normable}
For a locally convex space $L$ the following are equivalent:
\begin{enumerate}[label=\textup{(\alph*)}]
\item $(L,\tau(L,L'))$ is a normable barrelled space;
\item $\Ls$ is quasi-complete and is the linear span of a compact set;
\item $\Ls$ is quasi-complete and $(L,w)$ linearly embeds into $C_p(Q)$
for some compact space $Q$.
\end{enumerate}
\end{theorem}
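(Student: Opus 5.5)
We prove (a)$\Rightarrow$(b)$\Rightarrow$(c)$\Rightarrow$(a).

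\emph{(a)$\Rightarrow$(b).} Let $\tau=\tau(L,L')$ be given by a norm with closed unit ball $U$, and put $K=U^\circ$. By Alaoglu--Bourbaki, $K$ is an absolutely convex $\sigma(L',L)$-compact set. Since $U$ absorbs points, $L'=\bigcup_n nK=\spn K$. Barrelledness gives quasi-completeness. Indeed, a closed bounded subset of $\Ls$ is equicontinuous, hence contained in some $nK$. It is therefore a closed subset of a compact set, so it is compact and in particular complete.

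\emph{(b)$\Rightarrow$(c).} Let $L'=\spn K$ with $K$ compact in $\Ls$. By quasi-completeness, the closed absolutely convex hull $A$ of $K$ is a closed bounded set, hence complete. It is also precompact, being the absolutely convex hull of a precompact set in a locally convex space. So $A$ is compact, and $L'=\bigcup_n nA$. Put $Q=A$ with the weak$^*$ topology and consider the restriction map $L\to C_p(Q)$, $x\mapsto (f\mapsto f(x))|_A$. It is linear and continuous for the weak topology $w=\sigma(L,L')$. It is injective because $A$ spans $L'$, which separates the points of $L$. It is a homeomorphism onto its image because $\sigma(L,L')$ is generated by the evaluations at points of $L'=\bigcup nA$, and evaluation at $nf$ is $n$ times evaluation at $f$.

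\emph{(c)$\Rightarrow$(a).} This is the main step. Let $e\colon(L,w)\to C_p(Q)$ be a linear embedding, and let $\delta_q$ be evaluation at $q$. Each functional $x\mapsto e(x)(q)$ is $w$-continuous, so it defines an element $\phi(q)\in L'$. The map $\phi\colon Q\to\Ls$ is continuous, so $K=\phi(Q)$ is compact. Because $e$ is an embedding, the topology $w$ is generated by the functionals in $K$. Hence every $f\in L'$ is a $w$-continuous functional dominated by finitely many elements of $K$, and so $f\in\spn K$. Thus $L'=\spn K$, which yields (b) apart from quasi-completeness, which is assumed.

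Now let $A=\overline{\mathrm{acx}}(K)$, which is compact as shown above. Then $L'=\bigcup_n nA$. By Proposition~\ref{prop:mackey}, (3)$\Rightarrow$(4), the absolutely convex compact sets have a countable cofinal family. I would instead argue directly that $\{nA\}$ is itself cofinal, by the Baire argument: an absolutely convex compact set $B$ is covered by the closed sets $B\cap nA$, so some $B\cap nA$ has nonempty interior in $B$. Absolute convexity then gives $B\subseteq 2mnA$ for some $m$. Since $\tau(L,L')$ is the topology of uniform convergence on absolutely convex weakly compact sets, it is generated by the single seminorm $p_A(x)=\sup_{f\in A}|f(x)|$. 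This is a norm because $A$ spans $L'$.

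Barrelledness follows from quasi-completeness. Let $T$ be a barrel in $(L,\tau)$. Its polar $T^\circ$ is weak$^*$ bounded and closed, so by quasi-completeness it is compact; it is also absolutely convex. Hence $T^\circ$ is contained in some $nA$ by cofinality, i.e.\ it is equicontinuous. By the bipolar theorem $T=T^{\circ\circ}$ is then a $\tau$-neighbourhood of zero. This reuses the classical argument also behind Theorem~\ref{thm:main}, (iv)$\Rightarrow$(ii).

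The main obstacle is the (c)$\Rightarrow$(a) step. One must check that the span of the image of $Q$ really exhausts $L'$, which is a finite-dominance argument for weakly continuous functionals. Quasi-completeness is then needed twice: once to make the hull $A$ compact, and once to make polars of barrels compact.
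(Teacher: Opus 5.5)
Your proof is correct and is essentially the paper's argument. You take $L'=\bigcup_n nA$ for the compact closed absolutely convex hull $A$, use Baire category to make $\{nA\}$ cofinal so that $\tau(L,L')$ is the norm $p_A$, get barrelledness from quasi-completeness, and identify $L'=\spn K$ via the dual of the weak topology generated by $K$ where the paper uses Hahn--Banach extension to $C_p(Q)$. One difference is that you apply Baire in the compact space $B$ rather than in the Banach space $L'_B$ of Fact~\ref{f:known}(7), which avoids that fact. Note, however, that passing from a relatively open piece of $B$ inside $nA$ to $B\subseteq 2mnA$ needs the $\sigma(L',L)$-boundedness of $B$ as well as absolute convexity: $m$ is chosen uniformly from the finitely many points defining the relative neighbourhood.
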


In \S\,\ref{sec:func} we study the duals of function spaces: for
$C_k(X)$, hemicompactness of $(C_k(X))'_\sigma$ is equivalent to
hemicompactness of $X$ (Corollary~\ref{cor:Ck}), whereas for $C_p(X)$ it
is required that $X$ be countable and all compact subsets of $X$ be finite
(Proposition~\ref{prop:Cp}).

Hemicompactness of $\Ls$ is never accompanied by the $k$-property, except
in the trivial case: $\Ls$ is a $k_\omega$-space if and only if
$\dim L<\infty$ \cite{R-kd}.

\section{Notation and known facts}

All spaces are real and Hausdorff. For a locally convex space (LCS)
$(L,\tau)$ we denote by $L'$ its dual, $\Ls=(L',\sigma(L',L))$, by $L^*$
the algebraic dual with the topology $\sigma(L^*,L)$; $\tau(L,L')$ and
$\beta(L,L')$ are the Mackey and the strong topologies. For $A\subseteq L$
\[
A^\circ=\{f\in L':\ |f|\le1\ \text{on}\ A\},\qquad
A^{\circ*}=\{\psi\in L^*:\ |\psi|\le1\ \text{on}\ A\},
\]
and for $K\subseteq L'$ we put $K_\circ=\{x\in L:\ |f(x)|\le1\ \forall
f\in K\}$. A \emph{barrel} is a closed absolutely convex absorbing set;
$L$ is \emph{barrelled} if every barrel is a neighbourhood of zero. A set
$A\subseteq L'$ is \emph{equicontinuous} if $A\subseteq U^\circ$ for a
neighbourhood of zero $U$. We call $\Ls$ \emph{quasi-complete} if every
bounded subset of $\Ls$ is relatively compact (for weak topologies this
is equivalent to the usual quasi-completeness, since bounded sets are
precompact there). A sequence $(B_n)$ of bounded subsets of $\Ls$ is
called a \emph{fundamental system of bounded sets} if every bounded subset
of $\Ls$ is contained in some $B_n$.

For a Tychonoff space $X$, $E(X)$ denotes the vector space with basis
$X$, i.e., the finite linear combinations of Dirac measures $\delta_x$,
$x\in X$; we put $L_p(X)=(E(X),\sigma(E(X),C(X)))$ and
$L_b(X)=(E(X),\sigma(E(X),C_b(X)))$. By $\upsilon X$ we denote the Hewitt
realcompactification of $X$; every $f\in C(X)$ extends uniquely to
$f^\upsilon\in C(\upsilon X)$ \cite{Eng}.

\begin{fact}\label{f:known}
\begin{enumerate}[label=\textup{(\arabic*)}]
\item \textup{(Alaoglu--Bourbaki)} For a neighbourhood of zero $U$ the
polar $U^\circ$ is compact in $\Ls$.
\item The closed absolutely convex bounded subsets of $\Ls$ are precisely
the polars $D^\circ$ of barrels $D\subseteq L$, and $(D^\circ)_\circ=D$
\textup{(}bipolar theorem\textup{)}.
\item A topology on $L$ compatible with the duality $\langle
L,L'\rangle$ is barrelled if and only if it coincides with
$\beta(L,L')$.
\item Metrizable LCS are Mackey spaces, i.e., $\tau=\tau(L,L')$
\cite{NB,PCB}.
\item \textup{(Webb \cite[Proposition~4.1]{Webb}; see also
\cite[Theorem~1.5]{SW})} If $L$ is quasi-barrelled (e.g., metrizable)
and every $\sigma(L',L)$-convergent sequence is equicontinuous (i.e., $L$
is $c_0$-barrelled in the sense of \cite[8.2.22]{PCB}), then $L$ is
barrelled.
\item A Mackey space $L$ is barrelled if and only if $\Ls$ is
quasi-complete \cite[Corollary~4.1.15]{PCB}; the implication
``barrelled $\Rightarrow$ $\Ls$ quasi-complete'' (and the equivalence of
weak boundedness, relative weak compactness and equicontinuity in $L'$
for barrelled $L$) is \cite[Ch.~IV, \S\,2, Theorem~1 and
Corollary~2]{Bourbaki}.
\item If $A\subseteq\Ls$ is an absolutely convex compact set, then
$L'_A=\spn A$ with the Minkowski functional of $A$ is a Banach space,
and its norm topology is finer than $\sigma(L',L)|_{L'_A}$
\cite[3.2.2, 3.2.5]{PCB}.
\item The topology $\sigma(L,L')$ is metrizable if and only if
$\dim L'\le\aleph_0$: a countable base of neighbourhoods of zero is given
by a countable set of functionals whose linear span is $L'$
\cite{Jarchow}.
\item Metrizable LCS are quasi-barrelled; for a quasi-barrelled $L$ a
subset of $L'$ is $\beta(L',L)$-bounded if and only if it is
equicontinuous \cite[Definition~4.1.1(ii) and Observation~4.1.2(b)]{PCB}.
\item An LCS of countable dimension is barrelled only in the finest
locally convex topology \cite[Ch.~III, \S\,1, Exercise~4(b)]{Bourbaki}; in
particular, an infinite-dimensional metrizable LCS of countable dimension
is not barrelled.
\item A product of LCS is barrelled if and only if all factors are
barrelled \cite[Proposition~4.2.5(i)]{PCB}.
\item A feral TVS is quasi-complete \cite[Proposition~2.9]{GOR-Baire}.
\end{enumerate}
\end{fact}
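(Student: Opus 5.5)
Since the items of Fact~\ref{f:known} are classical, my plan is to reduce each to a few basic tools rather than prove them from scratch: the Alaoglu--Bourbaki embedding of $L'$ into $L^*$, the bipolar theorem, and the Mackey--Arens theorem. Items (4), (5), (10), (11), (12) I would simply take from the cited sources. For (1), I would regard $L^*$ as a closed subspace of $\R^L$. If $U$ is a neighbourhood of zero, then $U^{\circ*}$ lies in $\prod_{x\in L}[-p_U(x),p_U(x)]$, where $p_U$ is the gauge of $U$. This product is compact by Tychonoff, and $U^{\circ*}$ is closed in it. Every $\psi\in U^{\circ*}$ is bounded on $U$, hence continuous, so $U^{\circ*}=U^\circ$ and $U^\circ$ is compact in $\Ls$.

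For (2), I would first take a barrel $D$. Then $D^\circ$ is $\sigma(L',L)$-closed and absolutely convex. It is bounded because $D$ is absorbing: every $x\in L$ satisfies $x\in\lambda D$ for some $\lambda$, so $|f(x)|\le\lambda$ on $D^\circ$. Conversely, let $B\subseteq\Ls$ be closed, absolutely convex and bounded. Then $B_\circ$ is closed and absolutely convex. It is absorbing precisely because $B$ is pointwise bounded. The bipolar theorem in the dual pair $\langle L',L\rangle$ gives $(B_\circ)^\circ=B$, and in $\langle L,L'\rangle$ it gives $(D^\circ)_\circ=D$ for closed absolutely convex $D$. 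Item (3) then follows. By (2), $\beta(L,L')$ is the topology of uniform convergence on the sets $D^\circ$, so a compatible $\tau$ is barrelled iff every barrel is a $\tau$-neighbourhood iff $\tau\supseteq\beta(L,L')$. The reverse inclusion $\tau\subseteq\beta(L,L')$ always holds, since equicontinuous sets are weak$^*$ bounded.

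For (6), I would argue as follows. If $L$ is barrelled, a bounded $B\subseteq\Ls$ lies in $(B_\circ)^\circ$; here $B_\circ$ is a barrel, hence a neighbourhood, so $(B_\circ)^\circ$ is compact by (1). Conversely, suppose $L$ is Mackey and $\Ls$ is quasi-complete, and let $D$ be a barrel. Then $D^\circ$ is closed, absolutely convex and bounded, hence compact by quasi-completeness. Mackey--Arens then makes $(D^\circ)_\circ=D$ a $\tau(L,L')$-neighbourhood of zero.

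The remaining items are routine. For (7), Cauchy sequences in the gauge of $A$ converge weak$^*$ in the compact set $A$, and closedness of $A$ gives norm convergence; the comparison of topologies holds because $A$ is $\sigma(L',L)$-bounded. For (8), a countable base gives countably many polars $U_n^\circ$ of finite sets of functionals, and every $f\in L'$ is continuous, hence belongs to the span of one of these finite sets. The converse is immediate. For (9), a metrizable space is bornological, hence quasi-barrelled, and strongly bounded sets in the dual of a quasi-barrelled space are equicontinuous essentially by definition. I expect no genuine obstacle, since the only deep inputs are the cited theorems of Webb and Saxon--Wilansky in (5) and the Baire-type argument behind (12).
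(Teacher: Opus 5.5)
Your proposal is correct. It goes further than the paper, which proves none of these items and refers to standard sources for each one.

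\begin{itemize}
\item \textbf{What your route buys.} You derive (1)--(3) and (6)--(9) from Tychonoff's theorem, the bipolar theorem and the Mackey--Arens theorem. This makes the section self-contained and shows that (3) and (6) are formal consequences of (1), (2) and the description of $\tau(L,L')$. The paper's citations buy only brevity.

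\item \textbf{Two points of precision.} In (7), a $p_A$-Cauchy sequence lies in some multiple $rA$ rather than in $A$ itself, and $p_A$ is a norm because the bounded set $A$ contains no line. In (8), the basic neighbourhoods are the prepolars $(F_n)_\circ$ of finite sets $F_n\subseteq L'$, not polars. The conclusion $f\in\spn F_n$ holds because $f$ is bounded, hence zero, on the subspace $\bigcap_{g\in F_n}\ker g$.

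\item \textbf{Which inputs are deep.} Your assessment here is off, and you could have proved (12) and (5) as well.
\begin{itemize}
\item Item (12) is immediate. A bounded set in a feral space spans a finite-dimensional subspace, which in a Hausdorff TVS is closed and Euclidean, so the set is relatively compact. No Baire argument is involved.
\item Item (5) follows from (9) by the rescaling trick the paper uses for Theorem~\ref{thm:normed}, (e)$\Rightarrow$(a). Suppose a weak$^*$-bounded set is not $\beta(L',L)$-bounded. Choose $f_n$ in it and $x_n$ in a bounded set $M$ with $|f_n(x_n)|\ge n^2$. Then $f_n/n\to0$ weak$^*$ but is unbounded on $M$, hence not equicontinuous. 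So every weak$^*$-bounded set is strongly bounded, hence equicontinuous by (9), which is barrelledness.
\end{itemize}
\end{itemize}
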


\begin{fact}\label{f:func}
Let $X$ be a Tychonoff space.
\begin{enumerate}[label=\textup{(\arabic*)}]
\item $C_p(X)$ is quasi-barrelled and hence a Mackey space
\cite[Corollary~11.7.3]{Jarchow}.
\item (Buchwalter--Schmets \cite{BS73}) $C_p(X)$ is barrelled if and only
if all functionally bounded subsets of $X$ are finite.
\item (Okunev \cite{Ok89}; see also \cite[Corollary~1.4]{KL22}) $L_p(X)$
is $\sigma$-compact if and only if $X$ is $\sigma$-compact.
\item The support of a bounded subset of $L_p(X)$ is functionally bounded
\cite[Lemma~7.5]{LRS-TVS}.
\item (Nachbin--Shirota, see \cite[Theorem~10.1.20]{PCB}) $C_k(X)$ is barrelled if
and only if every closed functionally bounded subset of $X$ is compact.
\item A Lindel\"of (in particular, countable) space is realcompact; in a
realcompact space closed functionally bounded subsets are compact; if
$X\subseteq Z\subseteq\upsilon X$ and $Z$ is realcompact, then
$Z=\upsilon X$ \cite[\S\,3.11]{Eng}.
\end{enumerate}
\end{fact}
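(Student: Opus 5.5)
Fact~\ref{f:func} collects known results, so the plan is to cite most items and give short arguments only where a self-contained check is cheap. Items (2) (Buchwalter--Schmets), (3) (Okunev) and (5) (Nachbin--Shirota) are deep theorems. I would quote them from \cite{BS73}, \cite{Ok89}, \cite{KL22} and \cite{PCB} and not reprove them. Item (6) consists of standard facts on realcompactness. I would refer to \cite[\S\,3.11]{Eng}, recalling only the key point of the uniqueness clause: if $X\subseteq Z\subseteq\upsilon X$ and $Z$ is realcompact, then the identity of $X$ extends to maps $\upsilon X\to Z$ and $Z\to\upsilon X$ which fix $X$, and density of $X$ forces $Z=\upsilon X$.

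For item (4) I would sketch the argument. Let $B\subseteq L_p(X)$ be bounded, so $\sup_{\mu\in B}|\mu(f)|<\infty$ for every $f\in C(X)$. Suppose the union of the supports is not functionally bounded. Then there is $g\in C(X)$ that is unbounded on it. Pick points $x_n$ in supports of $\mu_n\in B$ with $|g(x_n)|$ growing so fast that $(x_n)$ has disjoint neighbourhoods $V_n$ forming a discrete family. Then choose functions $h_n$ supported in $V_n$ with $h_n(x_n)=1$ and $h_n$ vanishing on the other points of $\operatorname{supp}\mu_n$. Finally choose scalars $c_n$ inductively so that $f=\sum c_nh_n$ (continuous by discreteness) satisfies $|\mu_n(f)|\ge n$. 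The delicate point is handling the remaining support points of $\mu_n$ that lie in earlier $V_k$. I would deal with this by choosing the $c_n$ inductively, after the earlier ones are fixed, so each new term dominates.

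For item (1), I would show that every $\beta(L',L)$-bounded set $M\subseteq L_p(X)$ has support contained in a fixed finite set $F$, and has uniformly bounded coefficients. Then $M\subseteq c\,U^\circ$ for the basic neighbourhood $U=\{f:|f|\le1\text{ on }F\}$, so $M$ is equicontinuous, and $C_p(X)$ is quasi-barrelled. Finiteness of supports would come from the following step. If the supports were infinite, pick an infinite discrete sequence of support points with disjoint neighbourhoods and bump functions $h_n$. The family $\{n\,h_n\}$ is pointwise bounded on $X$, hence bounded in $C_p(X)$. Yet suitable $\mu_n\in M$ give $|\mu_n(n h_n)|\to\infty$, contradicting strong boundedness. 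The Mackey property then follows from the general fact that quasi-barrelled spaces are Mackey \cite{Jarchow}. I expect the main obstacle, in both (1) and (4), to be this bookkeeping with overlapping supports. In the paper itself I would simply cite \cite[Corollary~11.7.3]{Jarchow} and \cite[Lemma~7.5]{LRS-TVS}.
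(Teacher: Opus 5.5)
Your plan for the statement itself matches the paper's treatment. The paper gives no proof of Fact~\ref{f:func}; each item is simply quoted from the indicated source. Your remark on the uniqueness clause in (6) is also correct: extend $X\hookrightarrow Z$ to $r\colon\upsilon X\to Z$; the inclusion composed with $r$ is the identity on the dense set $X$, hence on $\upsilon X$. The optional sketches for (1) and (4), however, each contain a step that fails as written.

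\textbf{Item (1).} The bounded set $\{nh_n\}$ does not yield a contradiction. Take $y_n\in\operatorname{supp}\mu_n$, $h_n(y_n)=1$, $h_n=0$ outside $W_n$, and $h_n=0$ on $\operatorname{supp}\mu_n\setminus\{y_n\}$ (you need this last condition too). Then $\mu_n(nh_n)=na_n$, where $a_n\neq0$ is the coefficient of $\mu_n$ at $y_n$, and $na_n$ need not be unbounded. For $M=\{n^{-2}\delta_{y_n}\}$ it is at most $1/n$, whatever subsequence and whatever $\mu_n$ you choose. The repair is immediate. Because the neighbourhoods $W_n$ are pairwise disjoint, at each point at most one $h_n$ is nonzero, so $\{c_nh_n\}$ is pointwise bounded for arbitrary scalars $c_n$. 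Take $c_n=n/a_n$; then $\mu_n(c_nh_n)=n$, contradicting $\beta(L',L)$-boundedness.

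\textbf{Item (4).} You only handle support points of $\mu_n$ lying in earlier sets $V_k$. But $\operatorname{supp}\mu_n$ may also meet $V_k$ with $k>n$, and then the later choice of $c_k$ can destroy $|\mu_n(f)|\ge n$. One fix is to choose $\mu_n\in B$ and $x_n\in\operatorname{supp}\mu_n$ with $|g(x_n)|>2+\max\{|g(y)|:y\in\bigcup_{j<n}\operatorname{supp}\mu_j\}$, and to put $V_n=\{x:\bigl|\,|g(x)|-|g(x_n)|\,\bigr|<\frac12\}$. This has three consequences:
\begin{enumerate}[label=\textup{(\roman*)}]
\item the family $(V_n)$ is discrete;
\item $V_k\cap\operatorname{supp}\mu_n=\emptyset$ for $k>n$, so $\mu_n(f)=\sum_{k\le n}c_k\mu_n(h_k)$ with $\mu_n(h_n)=a_n\neq0$;
\item choosing $c_n$ after $c_1,\dots,c_{n-1}$ therefore works.
\end{enumerate}
Alternatively, your phrase ``each new term dominates'' can be made precise. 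At step $k$, choose $c_k$ so large that $|\mu_j(\sum_{i\le k}c_ih_i)|\ge j$ for all $j\le k$. Each of these finitely many conditions excludes at most a bounded interval of values of $c_k$. Since $\operatorname{supp}\mu_j$ meets only finitely many $V_i$, the value $\mu_j(f)$ equals $\mu_j(\sum_{i\le k}c_ih_i)$ for all large $k$.
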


\section{Proofs of the theorems}

\begin{lemma}\label{lem:seq}
If $\Ls$ is not quasi-complete, then for every sequence $(K_n)$ of
compact subsets of $\Ls$ there is a compact set of the form
$C=\{0\}\cup\{h_k:k\in\mathbb N\}$, $h_k\to0$ in $\Ls$, not contained in
any $K_n$.
\end{lemma}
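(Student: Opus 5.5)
Since $\Ls$ is not quasi-complete, by Fact~\ref{f:known}(2) there is a barrel $D\subseteq L$ whose polar $B=D^\circ$ is a closed absolutely convex bounded subset of $\Ls$ that is not compact. The completion of $\Ls$ is (a closed subspace of) $L^*$ with $\sigma(L^*,L)$, and $D^{\circ*}$ is compact in $L^*$ by Tychonoff. Hence the closure of $B$ in $L^*$ is $D^{\circ*}$, and non-compactness of $B$ means there is $\psi\in D^{\circ*}\setminus L'$. The plan is to approximate $\psi$ by elements of $B$, and then turn the approximants into a null sequence by taking differences.

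\emph{Step 1.} Given compact sets $K_n$, we may replace each by the closed absolutely convex hull of $K_1\cup\dots\cup K_n$ taken inside $D^{\circ*}$... But that hull need not lie in $L'$, so instead we argue directly. Each $K_n$ is compact in $\Ls$, hence closed in $L^*$. Since $\psi\notin L'$ we have $\psi\notin K_n$, so $\psi$ has a basic weak$^*$ neighbourhood in $L^*$ missing $K_n$. We now build inductively points $x_1,x_2,\dots\in L$ and functionals $g_k\in B$, together with finite sets $F_k\subseteq L$ increasing, such that $g_k$ agrees with $\psi$ up to $1/k$ on $F_k$, and some element of the sequence avoids $K_n$.

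\emph{Step 2 (the key step).} We set $h_k=g_k-g_{k+1}$ (or $h_k=g_{m_k}-g_{m_k'}$ for suitable indices) and want $h_k\to0$ weak$^*$. This needs $g_k\to\psi$ pointwise on all of $L$, not just on a countable set. This is the main obstacle: $\sigma(L^*,L)$ is not metrizable, so $\psi$ need not be a sequential limit of $B$. My first attempt is to bypass it. Rather than requiring $h_k\to0$ on all of $L$, I would choose $h_k$ supported in a way that automatically forces convergence, using the countable family $(K_n)$ only to select countably many test points. Concretely, choose $x_n\in L$ and $\varepsilon_n$ with $U_n=\{\phi:|\phi(x_n)-\psi(x_n)|<\varepsilon_n\}$ missing $K_n$. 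If a single coordinate does not suffice, use finitely many.

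If this fails, I would switch to a different mechanism that produces null sequences automatically. Since $\psi\notin L'$, $\psi$ is discontinuous for $\tau(L,L')$, so $\ker\psi$ is dense. I would then use the Hahn--Banach/bipolar structure: take $h_k=(g_k-g)/c_k$ with scalars $c_k\to\infty$, where $g_k\in B$ approximates $\psi$ on $F_k$. Boundedness of $B$ gives $h_k\to0$ weak$^*$, since $h_k(x)\le 2p_D(x)/c_k\to0$ with $p_D$ finite because $D$ is absorbing. So null sequences come for free by scaling bounded sets.

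\emph{Step 3.} It remains to keep $h_k\notin K_k$ despite the scaling. Since $K_k$ is compact, $\sup_{K_k}|\phi(x)|=M_k(x)<\infty$ for each $x$. So I need $x_k\in L$ with $|h_k(x_k)|>M_k(x_k)$, i.e.\ $|g_k(x_k)-g(x_k)|>c_kM_k(x_k)$. Choose $x_k\in D$, so that the left side is at most $2$, and with $M_k(x_k)$ tiny relative to it. Here non-compactness enters: if no such $x_k$ existed, then $c_kK_k$ would control $B$ on $D$, and I would derive that $B\subseteq$ the bipolar of a multiple of $K_k$. One then wants $B$ to be compact via Krein-type arguments, which is still the delicate point. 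Finally, $C=\{0\}\cup\{h_k\}$ is compact. If $C\subseteq K_n$, then $h_k\in K_n$ for all $k$, contradicting $h_n\notin K_n$ once the $K_n$ are arranged increasing (replace $K_n$ by $K_1\cup\dots\cup K_n$).
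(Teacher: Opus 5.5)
\textbf{There is a genuine gap.} It is where you suspect, in Step~3, and the mechanism you propose there cannot work in general.

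A point $x_k$ with $|h_k(x_k)|>\sup_{\phi\in K_k}|\phi(x_k)|$ exists if and only if $h_k$ lies outside $P_k=((K_k)_\circ)^{\circ*}$, the $\sigma(L^*,L)$-closed absolutely convex hull of $K_k$ in $L^*$. Your $h_k$ lie in $\frac{2}{c_k}B$ with $c_k\to\infty$, so you need $B\not\subseteq cP_k$ for every $c$. When $\Ls$ is not quasi-complete, $P_k$ may leave $L'$ and contain $B$. This is exactly the failure of Krein's theorem you hoped to use.

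Concretely, take $L=C_p([0,1])$ and $K_k=\{\delta_t:t\in[0,1]\}$.
\begin{itemize}
\item $P_k$ is the unit ball of the Radon measures.
\item Every bounded $B\subseteq L_p([0,1])$ is $\|\cdot\|_1$-bounded, by Banach--Steinhaus on $(C[0,1],\|\cdot\|_\infty)$.
\item Hence for large $c_k$ no such $x_k$ exists.
\end{itemize}
The conclusion of the lemma is still true in this example: $\frac{1}{2k}(\delta_0+\delta_1)\notin K_k$. So separating $h_k$ from $K_k$ by a single test vector is the wrong criterion.

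\textbf{The missing idea.} Test membership in $K_k$ itself, using only that $K_k$ is closed in $L^*$. You already have both ingredients: $\psi\in D^{\circ*}\setminus L'$ and a neighbourhood of $\psi$ in $L^*$ missing $K_n$ (Step~1), and ``null sequences come from scaling a bounded set'' (Step~2). Combine them by scaling $\psi$ itself rather than differences $g_k-g$:
\begin{itemize}
\item $\frac1k\psi\notin L'\supseteq K_k$.
\item $K_k$ is $\sigma(L^*,L)$-compact, hence closed in $L^*$, so some $L^*$-neighbourhood $W$ of $\frac1k\psi$ misses $K_k$.
\item $\frac1k\psi$ lies in $\frac1kD^{\circ*}$, which is the $L^*$-closure of $\frac1kD^\circ$, so you can pick $h_k\in W\cap\frac1kD^\circ$.
\end{itemize}
Then $h_k\notin K_k$, and $|h_k(x)|\le p_D(x)/k\to0$, so $h_k\to0$ in $\Ls$. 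If $C=\{0\}\cup\{h_k\}\subseteq K_n$, then $h_n\in K_n$, a contradiction.

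No approximation of $\psi$ on all of $L$, no differences, and no hull argument is needed. This is the paper's proof, phrased there as: $\varepsilon D^\circ\subseteq K_N$ would force $\varepsilon D^{\circ*}\subseteq K_N\subseteq L'$.

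One minor point: $\overline{D^\circ}^{L^*}=D^{\circ*}$ needs the bipolar theorem for the pair $(L,L^*)$ together with $(D^\circ)_\circ=D$. Compactness of $D^{\circ*}$ alone gives only the inclusion $\subseteq$.
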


\begin{proof}
Replacing $K_n$ by $K_1\cup\dots\cup K_n$, we may assume that $(K_n)$ is
increasing. By Fact~\ref{f:known}(2) there is a barrel $D\subseteq L$
with non-compact polar $D^\circ$. The set $D^{\circ*}$ is pointwise
bounded and closed in $(L^*,\sigma(L^*,L))$, hence compact; therefore
$D^{\circ*}\not\subseteq L'$ (otherwise $D^\circ=D^{\circ*}$ would be
compact).

Since $D$ is absorbing, $D^\circ$ is $\sigma(L',L)$-bounded and therefore
contains no lines; the Minkowski functional $q$ of $D^\circ$ is a norm on
$F=\spn D^\circ$. If $h_k\in F$ and $q(h_k)\to0$, then
$|h_k(x)|\le t\,q(h_k)$ for $x\in tD$, and $D$ is absorbing; hence
$h_k\to0$ in $\Ls$.

Suppose that $\{f\in F:\ q(f)<1/N\}\subseteq K_N$ for some $N$. Then
$\varepsilon D^\circ\subseteq K_N$ for $\varepsilon=1/(2N)$. The set
$K_N$ is also compact in $(L^*,\sigma(L^*,L))$ and hence closed there. By
the bipolar theorem for the pair $(L,L^*)$ and the equality
$(D^\circ)_\circ=D$, the closure of $D^\circ$ in $L^*$ is $D^{\circ*}$.
Hence $\varepsilon D^{\circ*}\subseteq K_N\subseteq L'$, contradicting
$D^{\circ*}\not\subseteq L'$.

Thus for every $k$ there is $h_k\in F\setminus K_k$ with $q(h_k)<1/k$.
Then $h_k\to0$, $C=\{0\}\cup\{h_k\}$ is compact in $\Ls$, and if
$C\subseteq K_m$, then $h_m\in K_m$, a contradiction.
\end{proof}

\begin{cor}\label{cor:qc}
If $\Ls$ is hemicompact, then $\Ls$ is quasi-complete.
\end{cor}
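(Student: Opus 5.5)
This follows by contraposition directly from Lemma~\ref{lem:seq}. Suppose $\Ls$ is hemicompact, with respect to a sequence $(K_n)$ of compact subsets of $\Ls$, and assume, aiming for a contradiction, that $\Ls$ is not quasi-complete. We apply Lemma~\ref{lem:seq} to this particular sequence $(K_n)$. It yields a set $C=\{0\}\cup\{h_k:k\in\mathbb N\}$ with $h_k\to0$ in $\Ls$ that is not contained in any $K_n$.

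The only point to verify is that $C$ is compact in $\Ls$, and this is standard. A convergent sequence together with its limit is compact in any topological space: every open cover has a member containing $0$, that member contains all but finitely many $h_k$, and finitely many further members cover the rest. (The lemma already asserts this compactness.) By the definition of hemicompactness with respect to $(K_n)$, the compact set $C$ must lie in some $K_n$, which contradicts the conclusion of the lemma. Hence $\Ls$ is quasi-complete.

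There is no real obstacle here. All the substance sits in Lemma~\ref{lem:seq}, namely the construction of $h_k\in F\setminus K_k$ with $q(h_k)<1/k$ from a barrel whose polar is non-compact. Since that lemma is already established, the corollary is just a matter of quoting the definitions correctly. In particular, the lemma is stated for an \emph{arbitrary} sequence of compact sets, so it can be applied to the sequence that witnesses hemicompactness.
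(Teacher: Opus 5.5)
Your proof is correct and matches the paper's proof exactly: the paper also applies Lemma~\ref{lem:seq} to the sequence $(K_n)$ witnessing hemicompactness. The compact set $C$ produced by the lemma then cannot lie in any $K_n$, which is the contradiction you derive.
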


\begin{proof}
Apply Lemma~\ref{lem:seq} to a sequence $(K_n)$ with respect to which
$\Ls$ is hemicompact.
\end{proof}

The following lemma adapts the argument of K\c{a}kol and
Leiderman \cite[Theorem~2.7]{KL22}.

\begin{lemma}\label{lem:KL}
If $\Ls$ is $\sigma$-compact and quasi-complete, then $\Ls$ has a
countable fundamental system of bounded sets.
\end{lemma}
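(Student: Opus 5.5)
Write $L'=\bigcup_n K_n$ with each $K_n$ compact in $\Ls$. The plan is a Baire category argument on barrels, in the spirit of \cite[Theorem~2.7]{KL22}. Once we know that every bounded set lies in some fixed multiple of a fixed countable family of bounded sets, we are done.

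\textbf{Step 1: reduce to absolutely convex compact sets.} Since $\Ls$ is quasi-complete, the closed absolutely convex hull $A_n$ of $K_n$ is bounded, hence compact in $\Ls$. So $L'=\bigcup_n A_n$, and we may take the $A_n$ increasing (replace $A_n$ by $A_1+\dots+A_n$) with $A_n\subseteq A_{n+1}$. The candidate fundamental system is $(nA_n)_n$. Each set $nA_n$ is bounded, since it is compact.

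\textbf{Step 2: reduce to barrels.} Let $B\subseteq L'$ be bounded. By Fact~\ref{f:known}(2) we may assume $B=D^\circ$ for a barrel $D\subseteq L$, and quasi-completeness makes $B$ compact and absolutely convex. Fact~\ref{f:known}(7) then says that $L'_B=\spn B$, normed by the Minkowski functional of $B$, is a Banach space. Its norm topology is finer than $\sigma(L',L)$.

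\textbf{Step 3: Baire category in $L'_B$.} Each $A_n\cap L'_B$ is $\sigma(L',L)$-closed, hence closed in the Banach space $L'_B$. These sets cover $L'_B$, so by the Baire theorem some $A_n\cap L'_B$ has nonempty interior in $L'_B$. Since $A_n$ is absolutely convex, $A_n-A_n\subseteq 2A_n$. Therefore $A_n$ contains $\varepsilon B$ for some $\varepsilon>0$, and consequently $B\subseteq \varepsilon^{-1}A_n\subseteq mA_m$ for any $m\ge\max(n,\varepsilon^{-1})$, using that the $A_n$ increase and are balanced. So every bounded set lies in some $mA_m$, which is the required countable fundamental system.

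The main point to check is Step 3: that the $A_n\cap L'_B$ really are norm-closed and that Baire applies. This follows from the comparison of topologies in Fact~\ref{f:known}(7). The real input is quasi-completeness, which enters twice: once to make the hulls $A_n$ compact, and once to make $B$ a Banach disc.
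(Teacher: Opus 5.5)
Your proof is correct and follows the paper's argument. Quasi-completeness makes the closed absolutely convex hulls of the $K_n$ compact and lets you take a bounded set to be an absolutely convex compact disc $B$; the Baire theorem in the Banach space $L'_B$, combined with symmetry and convexity of $A_n$, then gives $\varepsilon B\subseteq A_n$. The differences are cosmetic: you make the cover increasing at the start via $A_1+\dots+A_n$ and use $(nA_n)$, whereas the paper uses $n\operatorname{conv}(B_1\cup\dots\cup B_n)$ at the end, and your reduction to polars of barrels in Step~2 is not needed, since only closedness and absolute convexity of $B$ are used.
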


\begin{proof}
Let $L'=\bigcup_nK_n$ with compact $K_n$, and let $B_n$ be the closed
absolutely convex hull of $K_n$ in $\Ls$; it is bounded and, by
quasi-completeness, compact. Let $A\subseteq L'$ be a bounded set; we may
assume that it is closed and absolutely convex, so $A$ is compact. By
Fact~\ref{f:known}(7), $L'_A$ is a Banach space with the norm
$\|\cdot\|_A$, and the sets $nB_n\cap L'_A$ are closed in it, absolutely
convex and cover $L'_A$. By the Baire theorem there are $k$, $x\in L'_A$
and $\varepsilon>0$ with $x+\varepsilon A\subseteq kB_k$. By symmetry
$-x+\varepsilon A\subseteq kB_k$, and by convexity
$\varepsilon A\subseteq kB_k$. Put
$C_n=n\operatorname{conv}(B_1\cup\dots\cup B_n)$; these sets are bounded,
and $A\subseteq\frac k\varepsilon B_k\subseteq C_m$ for
$m\ge\max(k,k/\varepsilon)$. Thus $(C_n)$ is a fundamental system of
bounded sets.
\end{proof}

\begin{proof}[Proof of Proposition~\ref{prop:mackey}]
(1)$\Rightarrow$(2) is obvious. (2)$\Rightarrow$(3): the polars
$U_n^\circ$ of basic neighbourhoods of zero are absolutely convex and
compact (Fact~\ref{f:known}(1)) and cover $L'$. (4)$\Rightarrow$(1): the
topology $\tau(L,L')$ is the topology of uniform convergence on
absolutely convex compact subsets of $\Ls$, and by (4) it is given by the
countable family of seminorms $x\mapsto\sup_{f\in A_n}|f(x)|$.

(3)$\Rightarrow$(4). Let $L'=\bigcup_nA_n$ with absolutely convex compact
$A_n$, and let $A$ be an absolutely convex compact set. By
Fact~\ref{f:known}(7), $L'_A$ is a Banach space, and the sets
$nA_n\cap L'_A$ are closed in it, absolutely convex and cover $L'_A$. As
in the proof of Lemma~\ref{lem:KL}, the Baire theorem gives
$\varepsilon A\subseteq kA_k$ for some $\varepsilon>0$ and $k$. Put
$A'_n=n\operatorname{conv}(A_1\cup\dots\cup A_n)$; the convex hull of
finitely many absolutely convex compact sets is an absolutely convex
compact set. Then $A\subseteq\frac k\varepsilon A_k\subseteq A'_m$ for
$m\ge\max(k,k/\varepsilon)$, i.e., $(A'_n)$ is the required subfamily.
\end{proof}

\begin{proof}[Proof of Theorem~\ref{thm:main}]
(i)$\Rightarrow$(v): compact sets cover $L'$ (singletons), and $\Ls$ is
quasi-complete by Corollary~\ref{cor:qc}.

(v)$\Rightarrow$(iv) is Lemma~\ref{lem:KL}.

(iv)$\Rightarrow$(i): let $(B_n)$ be a fundamental system of bounded
sets. By quasi-completeness the closures $\overline{B_n}$ are compact, and
every compact set is bounded and hence contained in some $B_n$; so $\Ls$
is hemicompact with respect to $(\overline{B_n})$.

(iv)$\Leftrightarrow$(iii). The topology $\tau(L,L')$ is the topology of
uniform convergence on absolutely convex compact subsets of $\Ls$, and
$\beta(L,L')$ is that on bounded subsets of $\Ls$; always
$\tau(L,L')\subseteq\beta(L,L')$. If $\Ls$ is quasi-complete, then the
closed absolutely convex hull of a bounded set is compact, and
$\beta(L,L')=\tau(L,L')$. Conversely, let $\beta(L,L')=\tau(L,L')$ and let
$B\subseteq L'$ be bounded. Then $B_\circ$ is a neighbourhood of zero in
$\tau(L,L')$ and contains $C_\circ$ for some absolutely convex compact
set $C$; by the bipolar theorem $B\subseteq(B_\circ)^\circ\subseteq
(C_\circ)^\circ=C$, so $\Ls$ is quasi-complete. Finally, $\beta(L,L')$ is
metrizable if and only if $\Ls$ has a countable fundamental system of
bounded sets.

(iii)$\Leftrightarrow$(ii) is Fact~\ref{f:known}(3) applied to
$\tau(L,L')$.

(ii)$\Leftrightarrow$(vi) is Proposition~\ref{prop:mackey},
(1)$\Leftrightarrow$(3).
\end{proof}

\begin{proof}[Proof of Theorem~\ref{thm:metr}]
By Fact~\ref{f:known}(4), $\tau=\tau(L,L')$.

(a)$\Rightarrow$(b). The polars $U_n^\circ$ are compact
(Fact~\ref{f:known}(1)). A compact subset of $\Ls$ is bounded and, by
barrelledness, equicontinuous, i.e., it is contained in some
$U_n^\circ$.

(b)$\Rightarrow$(c) is obvious. (c)$\Rightarrow$(a): by
Theorem~\ref{thm:main} the space $(L,\tau(L,L'))=(L,\tau)$ is barrelled.

(a)$\Rightarrow$(d): bounded sets are equicontinuous and lie in the
compact sets $U_n^\circ$. (d)$\Rightarrow$(a): if $D$ is a barrel, then
$D^\circ$ is bounded and closed, hence compact and absolutely convex, and
$D=(D^\circ)_\circ$ is a neighbourhood of zero in $\tau(L,L')=\tau$ (this
is Fact~\ref{f:known}(6)).

(a)$\Rightarrow$(e): a convergent sequence is bounded.
(e)$\Rightarrow$(a) is Fact~\ref{f:known}(5).

By Fact~\ref{f:known}(9), the $\beta(L',L)$-bounded subsets of $L'$ are
precisely the equicontinuous ones. (a)$\Rightarrow$(f): compact sets are
bounded and, by barrelledness, equicontinuous. (f)$\Rightarrow$(e): a
null sequence together with zero is a compact set.
\end{proof}

\begin{remark}\label{rem:KLq}
It is noted in \cite[Remark~2.8]{KL22} that for metrizable $L$ every
cover of $L'$ by countably many absolutely convex compact subsets of
$\Ls$ consists of $\beta(L',L)$-bounded sets, and it is asked whether
absolute convexity can be dropped. Since $L'$ is always covered by the
compact sets $U_n^\circ$, adding an arbitrary compact set to them again
gives a countable cover; therefore, by
Theorem~\ref{thm:metr}((a)$\Leftrightarrow$(f)), the answer is affirmative
if and only if $L$ is barrelled.
\end{remark}

\begin{proof}[Proof of Theorem~\ref{thm:normed}]
Apply Theorem~\ref{thm:metr} to $U_n=\frac1nB_L$: then
$U_n^\circ=nB_{L'}$, and equicontinuity in $L'$ means norm boundedness.
This gives (a)$\Leftrightarrow$(b)$\Leftrightarrow$(c)$\Leftrightarrow$(e).
Further, (a)$\Rightarrow$(d): compact sets are bounded, and by
barrelledness bounded sets are equicontinuous; (d)$\Rightarrow$(e): if
$f_n\to0$, then $\{0\}\cup\{f_n\}$ is compact.

For normed $L$ the implication (e)$\Rightarrow$(a) is easily proved
directly. If $(f_n)$ is $\sigma(L',L)$-bounded but $\|f_n\|\ge n^2$, then
$g_n=f_n/n\to0$ in $\Ls$ and $\|g_n\|\ge n$, contrary to (e). Hence every
$\sigma(L',L)$-bounded set is norm bounded; for a barrel $D$ we have
$D^\circ\subseteq CB_{L'}$ and $D=(D^\circ)_\circ\supseteq C^{-1}B_L$.
\end{proof}

\begin{proof}[Proof of Theorem~\ref{thm:normable}]
(a)$\Rightarrow$(b). Let $U$ be the unit ball of a norm defining
$\tau(L,L')$. Then $U^\circ$ is an absolutely convex compact subset of
$\Ls$ (Fact~\ref{f:known}(1)) and $L'=\bigcup_n nU^\circ$, so $L'$ is the
linear span of $U^\circ$. Since $(L,\tau(L,L'))$ is a barrelled Mackey
space, $\Ls$ is quasi-complete by Fact~\ref{f:known}(6).

(b)$\Rightarrow$(a). Let $L'=\spn K$ with $K$ compact, and let $A$ be the
closed absolutely convex hull of $K$; by quasi-completeness $A$ is
compact, and $L'=\bigcup_n nA$. Let $B$ be an absolutely convex compact
subset of $\Ls$. By Fact~\ref{f:known}(7), $L'_B$ is a Banach space
covered by the closed absolutely convex sets $nA\cap L'_B$; as in the
proof of Lemma~\ref{lem:KL}, the Baire theorem gives $\varepsilon
B\subseteq kA$ for some $\varepsilon>0$ and $k$. Hence $\tau(L,L')$, the
topology of uniform convergence on absolutely convex compact subsets of
$\Ls$, is given by the single seminorm $x\mapsto\sup_{f\in A}|f(x)|$,
which is a norm since $A$ spans $L'$. Thus $(L,\tau(L,L'))$ is normable,
and it is barrelled by Fact~\ref{f:known}(6).

(b)$\Leftrightarrow$(c). This is \cite[Theorem~3.2]{KL22}; for the
reader's convenience we sketch the argument. If $L'=\spn K$ with $K$
compact, then $x\mapsto(f\mapsto f(x))|_K$ is a linear injection of $L$
into $C_p(K)$, and it is a homeomorphism of $(L,w)$ onto its image, since
$\sigma(L,L')$ is the topology of pointwise convergence on $K$ (because
$\spn K=L'$). Conversely, let $T\colon(L,w)\to C_p(Q)$ be a linear
embedding. The map $q\mapsto\delta_q\circ T$ from $Q$ into $\Ls$ is
continuous, so $K=\{\delta_q\circ T:q\in Q\}$ is compact. Every $f\in L'$
is a continuous functional on the subspace $T(L)$ of $C_p(Q)$, hence
extends to $C_p(Q)$ and is a finite linear combination of evaluations;
thus $L'=\spn K$.
\end{proof}

\section{Function spaces}\label{sec:func}

In this section we study the weak$^*$ duals of function spaces: of
$C_p(X)$, of $C_b(X)$ with the pointwise topology, of $C_k(X)$ and, more
generally, of the spaces $C_\lambda(X)$ with the topology of uniform
convergence on the members of a family $\lambda$ of functionally bounded
sets. Throughout, $X$ is a Tychonoff space. We begin with a general
consequence of Theorem~\ref{thm:main} that applies to $C_p(X)$.

\begin{proposition}\label{prop:feral}
For an LCS $L$ the absolutely convex compact subsets of $\Ls$ are
finite-dimensional if and only if $\tau(L,L')=\sigma(L,L')$ \textup(this
part is standard\textup). For such $L$
(e.g., $L=C_p(X)$, see Lemma~\ref{lem:Lpac}) the space $\Ls$ is
quasi-complete if and only if it is \emph{feral}, i.e., all its bounded
subsets are finite-dimensional, and
\begin{align*}
\Ls\ \text{is hemicompact}&\iff \dim L'\le\aleph_0\ \text{and}\ \Ls\
\text{is feral}\\
&\iff \sigma(L,L')\ \text{is metrizable and}\ L\ \text{is barrelled}.
\end{align*}
\end{proposition}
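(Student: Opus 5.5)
We prove the standard equivalence first, then the quasi-completeness/feral statement, and then derive the two equivalences for hemicompactness from Theorem~\ref{thm:main}.

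\emph{Standard part.} Recall that $\tau(L,L')$ is the topology of uniform convergence on absolutely convex compact subsets of $\Ls$.

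Suppose first that every such set $A$ is finite-dimensional. Then $A$ lies in the absolutely convex hull of finitely many functionals spanning $\spn A$. Indeed, $A$ is bounded in the finite-dimensional Hausdorff space $\spn A$, so it is contained in a multiple of the absolutely convex hull of a basis. Hence the seminorm $\sup_A|f(x)|$ is dominated by a multiple of $\max_i|f_i(x)|$, and so $\tau(L,L')=\sigma(L,L')$.

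Conversely, suppose $\tau(L,L')=\sigma(L,L')$ and let $A$ be absolutely convex and compact. Then $A_\circ$ is a $\tau(L,L')$-neighbourhood of zero, so it contains $\{f_1,\dots,f_n\}_\circ$ for some $f_1,\dots,f_n\in L'$. By the bipolar theorem,
\[
A=(A_\circ)^\circ\subseteq\bigl(\{f_1,\dots,f_n\}_\circ\bigr)^\circ,
\]
and the right-hand side is the closed absolutely convex hull of the $f_i$, which lies in $\spn\{f_1,\dots,f_n\}$. Thus $A$ is finite-dimensional.

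\emph{Quasi-complete $\iff$ feral.} Feral implies quasi-complete by Fact~\ref{f:known}(12). For the converse, assume $\Ls$ is quasi-complete. The closed absolutely convex hull of a bounded set is then compact, hence finite-dimensional by the first part, so the bounded set itself is finite-dimensional.

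\emph{Hemicompactness.} Suppose $\Ls$ is hemicompact. By Corollary~\ref{cor:qc} it is quasi-complete, hence feral. By Theorem~\ref{thm:main}(vi), $L'$ is a countable union of absolutely convex compact sets, each finite-dimensional, so $\dim L'\le\aleph_0$.

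Now suppose $\dim L'\le\aleph_0$ and $\Ls$ is feral. Then $\Ls$ is quasi-complete. Writing $L'=\bigcup_n\spn\{e_1,\dots,e_n\}$ for a Hamel basis $(e_n)$, each of these finite-dimensional subspaces is a countable union of compact balls, so $\Ls$ is $\sigma$-compact. Theorem~\ref{thm:main}(v)$\Rightarrow$(i) then gives that $\Ls$ is hemicompact.

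For the last equivalence, first note that $\tau(L,L')=\sigma(L,L')$, and by Fact~\ref{f:known}(8) this topology is metrizable iff $\dim L'\le\aleph_0$. The remaining task is to match feral (equivalently, quasi-complete) with barrelledness of $L$.

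This comparison is the main obstacle, because "$L$ barrelled" refers to the given topology $\tau$ rather than the Mackey topology. The way around it is to squeeze $\tau$ between $\sigma$ and $\tau(L,L')$: compatibility gives $\sigma(L,L')\subseteq\tau\subseteq\tau(L,L')=\sigma(L,L')$, so $L$ carries the Mackey topology. Fact~\ref{f:known}(6) then says $L$ is barrelled iff $\Ls$ is quasi-complete, iff $\Ls$ is feral.

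Finally, the example $L=C_p(X)$ satisfies the hypothesis by Lemma~\ref{lem:Lpac}, which is stated later in the paper and which we cite as given.
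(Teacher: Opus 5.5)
Your proof is correct and follows essentially the same route as the paper. You handle the standard part via polars and the bipolar theorem, and quasi-completeness $\Leftrightarrow$ feral via Fact~\ref{f:known}(12) and compactness of closed absolutely convex hulls. The hemicompactness equivalences are read off from Theorem~\ref{thm:main} and Fact~\ref{f:known}(8); you use Corollary~\ref{cor:qc} and items (v), (vi) where the paper cites (i)$\Leftrightarrow$(ii)$\Leftrightarrow$(iv), which makes no real difference. Your squeeze $\sigma(L,L')\subseteq\tau\subseteq\tau(L,L')=\sigma(L,L')$ makes explicit why barrelledness of the given $L$ is the same as barrelledness of $(L,\tau(L,L'))$. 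The paper leaves this point implicit in ``the equality $\tau(L,L')=\sigma(L,L')$''.
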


\begin{proof}
The topology $\tau(L,L')$ is the topology of uniform convergence on
absolutely convex compact subsets of $\Ls$; if they are
finite-dimensional, then $\tau(L,L')\subseteq\sigma(L,L')$. Conversely, if
$\tau(L,L')=\sigma(L,L')$, then an absolutely convex compact set $K$ is
equicontinuous with respect to $\sigma(L,L')$, i.e., it is contained in
the absolutely convex hull of a finite set.

If $\Ls$ is quasi-complete, then a closed absolutely convex bounded set is
compact and hence finite-dimensional; conversely, a feral TVS is
quasi-complete (Fact~\ref{f:known}(12)). The last assertion follows from
Theorem~\ref{thm:main}((i)$\Leftrightarrow$(ii)$\Leftrightarrow$(iv)), the
equality $\tau(L,L')=\sigma(L,L')$ and Fact~\ref{f:known}(8).
\end{proof}

For strong duals, the feral property has been studied in connection
with the distinguishedness of the spaces $C_p(X)$ and $C_k(X)$;
see~\cite{KS23}.

\begin{lemma}\label{lem:Lpac}
The space $C_p(X)$ is a Mackey space, i.e.,
$\tau(C(X),E(X))=\sigma(C(X),E(X))$, and the absolutely convex compact
subsets of $L_p(X)$ are finite-dimensional.
\end{lemma}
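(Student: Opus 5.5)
The plan is to obtain the first assertion from Fact~\ref{f:func}(1) and to use the Banach-space structure of Fact~\ref{f:known}(7) for the second.

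\emph{First assertion.} By Fact~\ref{f:func}(1), $C_p(X)$ is quasi-barrelled and hence a Mackey space. Its topology is $\sigma(C(X),E(X))$, so $\tau(C(X),E(X))=\sigma(C(X),E(X))$. The standard part of Proposition~\ref{prop:feral} then immediately gives that the absolutely convex compact subsets of $L_p(X)$ are finite-dimensional.

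\emph{Direct argument for the second assertion.} Since the second part is the substantive one, I would also prove it directly, independently of Fact~\ref{f:func}(1). Let $A\subseteq L_p(X)$ be absolutely convex and compact. By Fact~\ref{f:known}(7), $E_A=\spn A$ with the Minkowski norm $\|\cdot\|_A$ is a Banach space.

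\begin{itemize}
\item $E_A$ is a subspace of $E(X)$, so it has countable or finite Hamel dimension in the relevant sense. More precisely, it is covered by the sets $F_S=\spn\{\delta_x:x\in S\}\cap E_A$ for finite $S\subseteq\operatorname{supp}A$.
\item The support of the bounded set $A$ is functionally bounded by Fact~\ref{f:func}(4). It need not be countable, so I will avoid a direct countability argument on the support.
\item Instead, for each $n$ consider $E_n=\{\mu\in E_A:\ |\operatorname{supp}\mu|\le n\}$. These sets cover $E_A$.
\item Each $E_n$ is closed in $\sigma(E(X),C(X))$. Indeed, a limit of measures with at most $n$ atoms is again such a measure, since on $E(X)$ the weak topology separates points by $C(X)$ and one can use disjoint neighbourhoods and Urysohn functions. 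Because the norm topology on $E_A$ is finer, each $E_n$ is also $\|\cdot\|_A$-closed.
\item By the Baire theorem some $E_n$ has interior in $E_A$. Since $E_n-E_n\subseteq E_{2n}$, the set $E_{2n}$ contains a ball, and hence $E_{2n}=E_A$.
\item A vector space all of whose elements have support of size at most $2n$ is finite-dimensional. If it had $2n+1$ linearly independent elements, a suitable linear combination would have support larger than $2n$, because generic combinations have support equal to the union of the supports.
\end{itemize}

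\emph{Main obstacle.} The step I expect to be hardest is checking that $E_n$ is weakly closed. For a net $\mu_\alpha\to\mu$ with $|\operatorname{supp}\mu|>n$, take $n+1$ points of $\operatorname{supp}\mu$ with pairwise disjoint neighbourhoods and bump functions $f_i$. Then $\mu_\alpha(f_i)\to\mu(f_i)\ne0$ for all $i$, which forces $\mu_\alpha$ to have an atom in each of the $n+1$ supports of the $f_i$. This contradicts $|\operatorname{supp}\mu_\alpha|\le n$.

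If the first assertion is taken from Fact~\ref{f:func}(1), this step can be bypassed entirely via Proposition~\ref{prop:feral}.
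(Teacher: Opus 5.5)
Your main route is exactly the paper's proof: the Mackey property is Fact~\ref{f:func}(1), and finite-dimensionality of the absolutely convex compact sets then follows from the standard part of Proposition~\ref{prop:feral}. Your supplementary Baire-category argument is also sound, with one small fix: choose the bump functions $f_i$ to vanish on the rest of the finite set $\operatorname{supp}\mu$, so that $\mu(f_i)\ne0$. Combined with the easy implication in Proposition~\ref{prop:feral}, that argument would even give the Mackey property itself without Fact~\ref{f:func}(1).
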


\begin{proof}
The first assertion is Fact~\ref{f:func}(1); the second follows from the
first by Proposition~\ref{prop:feral}.
\end{proof}

The space $L_p(X)$ is feral if and only if all functionally bounded
subsets of $X$ are finite (Fact~\ref{f:func}(4)).

\begin{proposition}\label{prop:Cp}
Let $L=C_p(X)$, so that $L'=E(X)$ and $\Ls=L_p(X)$. Then
\begin{enumerate}[label=\textup{(\arabic*)}]
\item $\tau(L,L')=\sigma(C(X),E(X))$ is metrizable $\iff$ $X$ is
countable (since $\dim E(X)=|X|$; Fact~\ref{f:known}(8));
\item $L_p(X)$ is quasi-complete $\iff$ $L_p(X)$ is feral $\iff$ $C_p(X)$
is barrelled $\iff$ all functionally bounded subsets of $X$ are finite
(Facts~\ref{f:known}(6) and~\ref{f:func}(2));
\item $L_p(X)$ is $\sigma$-compact $\iff$ $X$ is $\sigma$-compact
(Fact~\ref{f:func}(3));
\item $L_p(X)$ is hemicompact $\iff$ $X$ is countable and $L_p(X)$ is
feral $\iff$ $X$ is countable and all compact subsets of $X$ are finite
(Proposition~\ref{prop:feral}; for countable $X$ all functionally bounded
subsets are finite if and only if all compact subsets are finite, by
Fact~\ref{f:func}(6)).
\end{enumerate}
\end{proposition}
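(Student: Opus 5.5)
The four items of Proposition~\ref{prop:Cp} are largely assembled from earlier facts, so the plan is to verify each equivalence by pointing to the right ingredient. Throughout I use that $C_p(X)$ is a Mackey space (Lemma~\ref{lem:Lpac}), so $\tau(L,L')=\sigma(C(X),E(X))$, and that the absolutely convex compact subsets of $L_p(X)$ are finite-dimensional; hence Proposition~\ref{prop:feral} applies to $L=C_p(X)$.

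For (1), the Mackey topology equals $\sigma(L,L')$. By Fact~\ref{f:known}(8) it is metrizable iff $\dim E(X)\le\aleph_0$. Since the Dirac measures form a Hamel basis of $E(X)$, we have $\dim E(X)=|X|$.

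For (2), Proposition~\ref{prop:feral} gives that quasi-completeness of $\Ls$ is equivalent to ferality. Because $C_p(X)$ is Mackey, Fact~\ref{f:known}(6) gives that quasi-completeness of $L_p(X)$ is equivalent to barrelledness of $C_p(X)$. The Buchwalter--Schmets theorem (Fact~\ref{f:func}(2)) then identifies this with finiteness of all functionally bounded subsets of $X$. It is worth noting the direct route as a sanity check: by Fact~\ref{f:func}(4), bounded sets of $L_p(X)$ have functionally bounded supports, so they are finite-dimensional when those supports are finite. Conversely, an infinite functionally bounded $B\subseteq X$ yields the bounded, infinite-dimensional set $\{\delta_x:x\in B\}$.

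Item (3) is exactly Okunev's theorem (Fact~\ref{f:func}(3)).

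For (4), Proposition~\ref{prop:feral} says hemicompactness is equivalent to $\dim E(X)\le\aleph_0$ together with ferality of $L_p(X)$. By (1) and (2) this means $X$ is countable and all functionally bounded subsets of $X$ are finite. The only step needing an argument is replacing ``functionally bounded'' by ``compact'' when $X$ is countable:
\begin{itemize}
\item compact sets are functionally bounded, so one implication is trivial;
\item conversely, a countable $X$ is Lindel\"of, hence realcompact by Fact~\ref{f:func}(6);
\item in a realcompact space the closure of a functionally bounded set is still functionally bounded, hence closed and functionally bounded, hence compact;
\item so if all compact subsets are finite, every functionally bounded set lies in a finite closure and is finite.
\end{itemize}
This closure step is the main (if mild) obstacle. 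One must check that the closure of a functionally bounded set is functionally bounded, which holds because each continuous function bounded on a set is bounded by the same constant on its closure.
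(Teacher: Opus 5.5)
Your proposal is correct and follows the paper's own argument. Each item rests on the same ingredients the paper cites: Fact~\ref{f:known}(8) with $\dim E(X)=|X|$, Proposition~\ref{prop:feral} combined with Facts~\ref{f:known}(6) and~\ref{f:func}(2), Okunev's theorem, and realcompactness of countable spaces via Fact~\ref{f:func}(6); your closure argument for functionally bounded sets and the direct ferality check via Fact~\ref{f:func}(4) only spell out details the paper leaves implicit.
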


For example, $L_p(\mathbb N)=(\varphi,\sigma(\varphi,\R^{\mathbb N}))$ is
hemicompact; for a compact space $Q$ the space $L_p(Q)$ is always
$\sigma$-compact but hemicompact only for finite $Q$; if $Q$ is a
convergent sequence together with its limit, then the topology $\tau(L,L')$ is metrizable, but $C_p(Q)$ is not
barrelled; for $X=[0,1]$ the space $L_p(X)$ is $\sigma$-compact, but
$\tau(L,L')$ is not metrizable. Thus, on the spaces $L_p(X)$,
metrizability of $\tau(L,L')$ and quasi-completeness of $\Ls$ are
independent, $\sigma$-compactness is weaker than their conjunction, and
hemicompactness is exactly their conjunction.

The space $L_p(X)$ can also be hemicompact when $X$ is non-discrete. Let
$X=\omega\cup\{p\}$, where $p$ is a free ultrafilter on $\omega$, the
points of $\omega$ are isolated, and the neighbourhoods of $p$ are the
sets $A\cup\{p\}$, $A\in p$. The space $X$ is countable and all its
compact subsets are finite: if a compact set $K$ were infinite, then
$K\cap\omega$ could be split into two infinite sets, one of which does not
belong to $p$ and is therefore closed and discrete in $K$, which is
impossible. By Proposition~\ref{prop:Cp}, $L_p(X)$ is hemicompact, while
$C_p(X)$ is a metrizable barrelled space different from $\R^X$ (since
$X$ is not discrete).

The space $L_b(X)$ is $\Ls$ for $L=C_b(X)$ with the topology of pointwise
convergence.

\begin{proposition}\label{prop:Lb}
The space $L_b(X)$ is hemicompact if and only if $X$ is finite.
\end{proposition}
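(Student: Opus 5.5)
The plan is to treat the two directions separately. The finite case is trivial, and for infinite $X$ I will show that $L_b(X)$ is not quasi-complete and then apply Corollary~\ref{cor:qc}.

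First I identify the duality. The topology of pointwise convergence on $C_b(X)$ is the weak topology generated by the evaluations $\delta_x$, $x\in X$. Hence its dual is $\spn\{\delta_x\}=E(X)$, and these functionals are linearly independent because $X$ is Tychonoff. So $L_b(X)=\Ls$ for $L=(C_b(X),\text{pointwise})$, as the paper states.

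If $X$ is finite, then $L'=E(X)\cong\R^{|X|}$ with its Euclidean topology. This space is hemicompact with respect to the balls of radius $n$.

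Now let $X$ be infinite. Since $X$ is infinite and Hausdorff, I choose distinct points $x_n$ with pairwise disjoint open neighbourhoods $U_n\ni x_n$. The set $B=\{\delta_{x_n}:n\in\mathbb N\}$ is $\sigma(E(X),C_b(X))$-bounded, because every $f\in C_b(X)$ is bounded.

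I form $\psi=\sum_n 2^{-n}\delta_{x_n}$, which I regard as an element of $L^*$ via $\psi(f)=\sum_n2^{-n}f(x_n)$. This series converges for bounded $f$. Moreover, the partial sums lie in the absolutely convex hull of $B$ and converge to $\psi$ in $\sigma(L^*,L)$. Hence $\psi$ lies in the $\sigma(L^*,L)$-closure of the absolutely convex hull of $B$.

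The key step is to show that $\psi\notin E(X)$. Suppose instead that $\psi=\sum_{i\le m}a_i\delta_{y_i}$. Because the $U_n$ are disjoint, only finitely many of them contain some $y_i$. So I can pick $n$ such that $U_n\cap\{y_1,\dots,y_m\}=\emptyset$. By complete regularity there is a continuous $f\colon X\to[0,1]$ with $f(x_n)=1$ and $f=0$ outside $U_n$. Then $\psi(f)\ge2^{-n}>0$, whereas $\sum a_if(y_i)=0$, which is a contradiction.

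Now let $A$ be the closed absolutely convex hull of $B$ in $\Ls$; it is bounded. If $\Ls$ were quasi-complete, then $A$ would be compact in $\Ls$. It would then also be compact, hence closed, in $(L^*,\sigma(L^*,L))$, as in the proof of Lemma~\ref{lem:seq}. In that case $A$ would contain the $\sigma(L^*,L)$-closure of the absolutely convex hull of $B$, and in particular $\psi\in A\subseteq L'$, which contradicts the key step. Therefore $L_b(X)$ is not quasi-complete, and by Corollary~\ref{cor:qc} it is not hemicompact.

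The only step that needs care is the construction of a test function separating $\psi$ from a finite combination of point masses. The disjointness of the $U_n$ is exactly what makes this work.
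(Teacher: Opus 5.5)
Your proof is correct. It rests on the same underlying fact as the paper: for infinite $X$, an $\ell_1$-type bounded subset of $E(X)$ is not $\sigma(E(X),C_b(X))$-compact. The execution differs, though.

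The paper starts from the barrel $U=\{f:\|f\|_\infty\le1\}$. If $U$ were a $\tau(L,L')$-neighbourhood, its polar (the $\ell_1$-ball of $E(X)$) would be absolutely convex and compact. Then $(E(X),\|\cdot\|_1)$ would be a Banach space by Fact~\ref{f:known}(7), which it is not. Hence $(L,\tau(L,L'))$ is not barrelled, and Theorem~\ref{thm:main} finishes the argument.

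You instead prove non-quasi-completeness directly. You exhibit an explicit point $\psi=\sum_n2^{-n}\delta_{x_n}$ of $L^*\setminus L'$ in the $\sigma(L^*,L)$-closure of a bounded absolutely convex set, and then invoke only Corollary~\ref{cor:qc}. By Fact~\ref{f:known}(6) the two conclusions are equivalent. In fact $\psi\in U^{\circ*}$, so you are making concrete the element of $D^{\circ*}\setminus L'$ that Lemma~\ref{lem:seq} obtains abstractly.

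What each buys: the paper's argument is one line given the Banach-disk fact and Theorem~\ref{thm:main}, whose implication (i)$\Rightarrow$(ii) also passes through Lemma~\ref{lem:KL}. Yours is more elementary and needs nothing beyond Lemma~\ref{lem:seq}.

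One simplification: the pairwise disjoint neighbourhoods $U_n$ are unnecessary, and you asserted their existence in an infinite Hausdorff space without proof. Distinct points $x_n$ suffice. Pick $n$ with $x_n\notin\{y_1,\dots,y_m\}$ and a continuous $f\colon X\to[0,1]$ with $f(x_n)=1$ and $f(y_i)=0$ for all $i$. Since every term of $\psi(f)$ is nonnegative, $\psi(f)\ge2^{-n}>0=\sum_ia_if(y_i)$. So what makes the key step work is the positivity of the coefficients, not the disjointness.
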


\begin{proof}
Let $X$ be infinite. The ball $U=\{f\in C_b(X):\|f\|_\infty\le1\}$ is a
barrel in $L$. If $U$ were a neighbourhood of zero in $\tau(L,L')$, then
its polar, the ball $A=\{\mu\in E(X):\|\mu\|_1\le1\}$ (indeed,
$\sup_{\|f\|_\infty\le1}|\sum_ia_if(x_i)|=\sum_i|a_i|$ for distinct
$x_i$), would be an absolutely convex compact subset of $L_b(X)$
(Fact~\ref{f:known}(1)). The Minkowski functional of $A$ on
$L'_A=E(X)$ is the norm $\|\cdot\|_1$, so by Fact~\ref{f:known}(7) the
space $(E(X),\|\cdot\|_1)$ would be a Banach space, which is false for
infinite $X$. Hence $(L,\tau(L,L'))$ is not barrelled, and
$L_b(X)$ is not hemicompact by Theorem~\ref{thm:main}.
\end{proof}

For the free spaces $L(X)$, $V(X)$ and the free groups $F(X)$, $A(X)$,
which are not weak duals, see~\cite{R-hf}.

Note also that $C_p(X)$ is always non-$S_\sigma$ \cite{SSR17}, so
$L_p(X)$ contains no infinite-dimensional metrizable linear subspaces;
moreover, all $\kappa$-Fr\'echet--Urysohn subgroups of $L_p(X)$ are
locally finite-dimensional \cite[Theorem~7.6]{LRS-TVS}.

\medskip
Let now $\lambda$ be a family of functionally bounded subsets of $X$
covering $X$ and closed under finite unions, and let $C_\lambda(X)$ be
the space $C(X)$ with the topology of uniform convergence on the members
of $\lambda$ \cite[Ch.~I, \S\,2]{McCoyNtantu}. For $\lambda$ consisting of the finite
sets we obtain $C_p(X)$, and for all compact sets, $C_k(X)$. For
$A\in\lambda$ put $\widehat A=\operatorname{cl}_{\upsilon X}A$; this is a
compact set, and $\sup_A|f|=\sup_{\widehat A}|f^\upsilon|$. Put
$Y_\lambda=\bigcup_{A\in\lambda}\widehat A$, so that $X\subseteq
Y_\lambda\subseteq\upsilon X$.

For $C_k(X)$ the following lemma is \cite[Theorem~2.12 (due to Schmets)
and Proposition~2.13]{KL22}.

\begin{lemma}\label{lem:Clambda}
Let $L=C_\lambda(X)$.
\begin{enumerate}[label=\textup{(\alph*)}]
\item Every $\xi\in L'$ has the form $\xi(f)=\int_{\widehat
A}f^\upsilon\,d\mu$ for some $A\in\lambda$ and a Radon measure $\mu$ on
$\widehat A$.
\item For $p\in\upsilon X$ the functional $\delta_p(f)=f^\upsilon(p)$ is
continuous on $L$ if and only if $p\in Y_\lambda$.
\item The map $p\mapsto\delta_p$ is a homeomorphism of $Y_\lambda$ (with
the topology induced from $\upsilon X$) onto a closed subset of $\Ls$.
\end{enumerate}
\end{lemma}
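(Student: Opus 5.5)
I would prove the three parts in order, using the standard representation of continuous functionals on $C_\lambda(X)$ through the realcompactification.

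\emph{Part (a).} Let $\xi\in L'$. By continuity there are $A\in\lambda$ and $c>0$ with $|\xi(f)|\le c\sup_A|f|=c\sup_{\widehat A}|f^\upsilon|$ for all $f\in C(X)$. Consider the restriction map $R\colon C(X)\to C(\widehat A)$, $f\mapsto f^\upsilon|_{\widehat A}$. Then $\xi$ factors as a bounded linear functional on the subspace $R(C(X))$ with the sup norm. By Hahn--Banach it extends to $C(\widehat A)$, and the Riesz representation theorem gives a Radon measure $\mu$ on $\widehat A$.

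To justify this I need two facts.
\begin{itemize}
\item $\widehat A$ is compact. Functionally bounded sets in $X$ have compact closure in $\upsilon X$: every $f^\upsilon$ is bounded on $\operatorname{cl}_{\upsilon X}A$, hence this closure is a closed, functionally bounded subset of the realcompact space $\upsilon X$, so it is compact by Fact~\ref{f:func}(6).
\item $\sup_A|f|=\sup_{\widehat A}|f^\upsilon|$. This holds by continuity and density.
\end{itemize}
The paper already asserts both facts, so I would simply cite them.

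\emph{Part (b).} If $p\in\widehat A$, then $|\delta_p(f)|\le\sup_A|f|$, so $\delta_p$ is continuous. Conversely, suppose $\delta_p$ is continuous. By (a) we have $f^\upsilon(p)=\int_{\widehat A}f^\upsilon\,d\mu$ for some $A\in\lambda$. Assume $p\notin\widehat A$. Since $\upsilon X$ is Tychonoff, there is $g\in C(\upsilon X)$ with $g(p)=1$ and $g=0$ on $\widehat A$; put $f=g|_X$, so that $f^\upsilon=g$. Then $\delta_p(f)=1$ while $\int f^\upsilon\,d\mu=0$, a contradiction. Hence $p\in\widehat A\subseteq Y_\lambda$.

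\emph{Part (c).} The map $p\mapsto\delta_p$ from $Y_\lambda$ to $\Ls$ is continuous, because each $p\mapsto f^\upsilon(p)$ is continuous. It is injective, since $C(X)$ restricted from $C(\upsilon X)$ separates the points of $\upsilon X$. The weak$^*$ topology restricted to the image is exactly the topology generated by the functions $f^\upsilon$, $f\in C(X)$, on $Y_\lambda$. This is the subspace topology from $\upsilon X$, because $\upsilon X$ carries the weak topology of $\{f^\upsilon\}$ (it is Tychonoff and every continuous real function on it is some $f^\upsilon$). So the map is a homeomorphism onto its image.

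\emph{Closedness, the main obstacle.} Take a net $\delta_{p_\alpha}\to\xi$ in $\Ls$. By (a), $\xi$ is represented by a measure on some $\widehat A$. First I show $\xi$ is multiplicative: $\xi(fg)=\lim f^\upsilon(p_\alpha)g^\upsilon(p_\alpha)=\xi(f)\xi(g)$, and also $\xi(1)=1$. A multiplicative functional given by a Radon measure $\mu$ on the compact set $\widehat A$ is a point evaluation $\delta_q$ with $q\in\widehat A$. Here the restrictions $f^\upsilon|_{\widehat A}$ form a subalgebra of $C(\widehat A)$ that is uniformly dense by Stone--Weierstrass. Multiplicativity passes to the closure because $\mu$ is a bounded functional, and a multiplicative functional on $C(\widehat A)$ is a point evaluation. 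Then $\xi=\delta_q$ with $q\in Y_\lambda$, so the image is closed. The step needing care is the density and continuity argument that extends multiplicativity from the subalgebra to all of $C(\widehat A)$.
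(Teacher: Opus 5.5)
Your proposal is correct and follows essentially the same route as the paper: the same factorization through $C(\widehat A)$ plus Riesz in (a), the same separating function $g$ in (b), and the same multiplicativity argument yielding $\xi=\delta_q$ with $q\in\widehat A$ in (c). The only difference is that you use Hahn--Banach and Stone--Weierstrass density, whereas the paper observes that the restriction map $C(\upsilon X)\to C(\widehat A)$ is onto (a Tietze extension through $\beta\upsilon X$); this makes the extension in (a) unique and the transfer of multiplicativity to $C(\widehat A)$ in (c) immediate, so the step you flagged as needing care is not needed.
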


\begin{proof}
(a) For $\xi\in L'$ there are $A\in\lambda$ and $c>0$ with
$|\xi(f)|\le c\sup_{\widehat A}|f^\upsilon|$ for all $f\in C(X)$. The
restriction map $r\colon C(\upsilon X)\to C(\widehat A)$ is surjective (a
continuous function on the compact set $\widehat A$ extends to
$\beta\upsilon X$). If $r(f^\upsilon)=r(h^\upsilon)$, then
$|\xi(f)-\xi(h)|\le c\sup_{\widehat A}|f^\upsilon-h^\upsilon|=0$; hence
$\tilde\xi(r(f^\upsilon))=\xi(f)$ is a well-defined linear functional on
$C(\widehat A)$ with $|\tilde\xi(g)|\le c\|g\|_\infty$. By the Riesz
theorem $\tilde\xi$ is given by a Radon measure $\mu$ on $\widehat A$,
and $\xi(f)=\int_{\widehat A}f^\upsilon\,d\mu$.

(b) If $p\in\widehat A$, then $|f^\upsilon(p)|\le\sup_A|f|$. Conversely,
if $|f^\upsilon(p)|\le c\sup_{\widehat A}|f^\upsilon|$ and
$p\notin\widehat A$, then $g\in C(\upsilon X)$ with $g|_{\widehat A}=0$
and $g(p)=1$ gives a contradiction.

(c) The topology $\sigma(L',L)$ on $\{\delta_p:p\in Y_\lambda\}$ is the
weak topology generated by $C(\upsilon X)$, i.e., the subspace topology
from $\upsilon X$. Let $\delta_{p_i}\to\xi\in L'$. Then
$\xi(fg)=\xi(f)\xi(g)$ and $\xi(1)=1$; by (a), $\xi$ is a measure on some
$\widehat A$, and since the restriction map is surjective, it defines a
multiplicative functional on $C(\widehat A)$, i.e., $\xi=\delta_q$ with
$q\in\widehat A\subseteq Y_\lambda$.
\end{proof}

\begin{proposition}\label{prop:Clambda}
Let $L=C_\lambda(X)$.
\begin{enumerate}[label=\textup{(\alph*)}]
\item If $\Ls$ is hemicompact, then $Y_\lambda=\upsilon X$ and
$\upsilon X$ is hemicompact.
\item If $\upsilon X$ is hemicompact and every compact subset of
$\upsilon X$ is contained in some $\widehat A$, $A\in\lambda$, then
$C_\lambda(X)$ is linearly homeomorphic to $C_k(\upsilon X)$, it is
metrizable and barrelled, and $\Ls$ is hemicompact.
\end{enumerate}
\end{proposition}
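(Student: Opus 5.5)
For (a), assume $\Ls$ is hemicompact. By Lemma~\ref{lem:Clambda}(c), $Y_\lambda$ is homeomorphic to a closed subset of $\Ls$. A closed subset of a hemicompact space is hemicompact: intersect the sequence $(K_n)$ with the closed set. Hence $Y_\lambda$ is hemicompact, and in particular $\sigma$-compact, hence Lindel\"of. By Fact~\ref{f:func}(6), $Y_\lambda$ is then realcompact. Since $X\subseteq Y_\lambda\subseteq\upsilon X$, Fact~\ref{f:func}(6) gives $Y_\lambda=\upsilon X$. Consequently $\upsilon X$ is hemicompact.

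For (b), consider the map $T\colon C(X)\to C(\upsilon X)$, $f\mapsto f^\upsilon$. It is a linear bijection, since restriction is its inverse. I would compare the two topologies.
\begin{itemize}
\item Each seminorm $\sup_A|f|$ with $A\in\lambda$ equals $\sup_{\widehat A}|f^\upsilon|$, and $\widehat A$ is compact in $\upsilon X$. Hence $T^{-1}$ is continuous from $C_k(\upsilon X)$ to $C_\lambda(X)$.
\item Conversely, every compact $K\subseteq\upsilon X$ lies in some $\widehat A$ by hypothesis. Hence $\sup_K|f^\upsilon|\le\sup_A|f|$, so $T$ is continuous.
\end{itemize}
Thus $C_\lambda(X)\cong C_k(\upsilon X)$. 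Since $\upsilon X$ is hemicompact, $C_k(\upsilon X)$ is metrizable by the Arens theorem.

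For barrelledness, I would use Fact~\ref{f:func}(5): every closed functionally bounded subset of $\upsilon X$ must be compact. This holds because $\upsilon X$ is realcompact, by Fact~\ref{f:func}(6). Finally, $L$ is metrizable, hence Mackey by Fact~\ref{f:known}(4), and barrelled. So Theorem~\ref{thm:main}((ii)$\Rightarrow$(i)), or directly Theorem~\ref{thm:metr}((a)$\Rightarrow$(c)), gives that $\Ls$ is hemicompact.

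The main subtlety is the step $Y_\lambda=\upsilon X$ in (a). It needs the closedness of the image from Lemma~\ref{lem:Clambda}(c) together with the characterization of $\upsilon X$ as the only realcompact space between $X$ and $\upsilon X$. Everything else is a routine comparison of seminorms.
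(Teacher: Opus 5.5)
Your proposal is correct and follows essentially the same route as the paper. For (a) you use the closed copy of $Y_\lambda$ in $\Ls$, then Lindel\"of $\Rightarrow$ realcompact $\Rightarrow$ $Y_\lambda=\upsilon X$. For (b) you identify the topology with $C_k(\upsilon X)$ via the sets $\widehat A$, then apply Arens, Nachbin--Shirota with realcompactness, and Theorem~\ref{thm:metr}; your explicit seminorm comparison and the remark that closed subsets of hemicompact spaces are hemicompact just spell out steps the paper leaves implicit.
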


\begin{proof}
(a) By Lemma~\ref{lem:Clambda}(c), $Y_\lambda$ is homeomorphic to a
closed subset of the hemicompact space $\Ls$ and is therefore
hemicompact. Hence $Y_\lambda$ is $\sigma$-compact, Lindel\"of and
therefore realcompact; since $X\subseteq Y_\lambda\subseteq\upsilon X$,
we get $Y_\lambda=\upsilon X$ (Fact~\ref{f:func}(6)).

(b) The topology of $C_\lambda(X)$ is the topology of uniform convergence
on the sets $\widehat A$, which are cofinal among the compact subsets of
$\upsilon X$, i.e., it is $C_k(\upsilon X)$. By the Arens theorem it is
metrizable; since $\upsilon X$ is realcompact, closed functionally bounded
subsets of $\upsilon X$ are compact, and $C_k(\upsilon X)$ is barrelled by
Fact~\ref{f:func}(5). It remains to apply Theorem~\ref{thm:metr}.
\end{proof}

\begin{cor}\label{cor:Ck}
The following are equivalent: \textup{(a)} $(C_k(X))'_\sigma$ is
hemicompact; \textup{(b)} $(C_k(X))'_\sigma$ is $\sigma$-compact;
\textup{(c)} $X$ is hemicompact, i.e., $C_k(X)$ is metrizable.
\end{cor}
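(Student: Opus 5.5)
We prove the cycle (a)$\Rightarrow$(b)$\Rightarrow$(c)$\Rightarrow$(a), working with $L=C_k(X)$, i.e., $\lambda$ the family of all compact subsets of $X$. For this $\lambda$ we have $\widehat A=A$ for every compact $A$, since a compact set is closed in $\upsilon X$. Hence $Y_\lambda=X$.

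(a)$\Rightarrow$(b) is trivial, because a hemicompact space is the union of its countably many compact sets.

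(b)$\Rightarrow$(c) uses Lemma~\ref{lem:Clambda}(c), which embeds $X=Y_\lambda$ as a closed subset of $\Ls$. Since $\Ls$ is $\sigma$-compact, so is $X$; in particular $X$ is Lindel\"of and therefore realcompact, so $\upsilon X=X$. Being $\sigma$-compact is weaker than being hemicompact, so we need an extra ingredient, and this is the main step. I would use Lemma~\ref{lem:Clambda}(a). Write $L'=\bigcup_n K_n$ with $K_n$ compact. Each $\xi\in L'$ is a Radon measure on some compact subset of $X$.

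First attempt: define a support map and show that $K_n$ meets only measures whose supports lie in some common compact $S_n\subseteq X$, which would give $L'=\bigcup_n \spn$ of measures on $S_n$. This is doubtful, since weak$^*$ compact sets of measures need not have a common compact support.

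Better route: use the known criterion of K\c{a}kol--Leiderman cited in the introduction (\cite[Theorem~2.11]{KL22}), which says exactly that $(C_k(X))'_\sigma$ is $\sigma$-compact if and only if $X$ is hemicompact. If a self-contained argument is wanted, proceed by contradiction with a Baire-type argument. Suppose no sequence of compact sets is cofinal. Take compact sets $K_n\subseteq L'$ covering $L'$, and let $S_n\subseteq X$ be the closure of the union of the supports of the measures in $K_n$. Show that each $S_n$ is compact: if not, there are functions in $C(X)$ that separate a closed discrete sequence of support points, and these give an unbounded evaluation on $K_n$. Then, since $X$ is not hemicompact, choose a compact $B\not\subseteq S_n$ for any $n$ by a diagonal construction, and produce a Dirac measure $\delta_p$ with $p\in B$ outside all $S_n$, contradicting the covering. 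I expect this step, proving that the supports from a single compact $K_n$ lie in one compact set, to be the real obstacle; the fallback is to cite \cite{KL22} directly.

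(c)$\Rightarrow$(a) is Proposition~\ref{prop:Clambda}(b). A hemicompact $X$ is $\sigma$-compact, hence Lindel\"of and realcompact, so $\upsilon X=X$, which is hemicompact. Every compact subset of $\upsilon X=X$ belongs to $\lambda$. Therefore $\Ls$ is hemicompact. The equality of (c) with metrizability of $C_k(X)$ is Arens' theorem.
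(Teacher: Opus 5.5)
Your overall route is the paper's: (a)$\Rightarrow$(b) is trivial, (b)$\Rightarrow$(c) is a citation of \cite{KL22}, and (c)$\Rightarrow$(a) is Proposition~\ref{prop:Clambda}(b) after noting $\upsilon X=X$. For (b)$\Rightarrow$(c) the paper cites Theorem~2.11 together with Proposition~2.1 there, because Theorem~2.11 is stated via the Eberlein--Grothendieck property of $(C_k(X),w)$. With that citation the proposal is correct.

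Your optional self-contained argument for (b)$\Rightarrow$(c), however, fails at the step you treat as routine. No Dirac measure $\delta_p$ with $p$ outside all $S_n$ can exist. For every $p\in X$ the functional $\delta_p$ lies in $L'$, hence in some $K_n$, so $p\in S_n$. Thus $X=\bigcup_nS_n$ holds automatically, which only recovers $\sigma$-compactness.

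The fix is to use a measure with infinitely many atoms:
\begin{itemize}
\item Since the $S_n$ are compact and $X$ is not hemicompact, there is a compact $B$ with $B\not\subseteq S_n$ for all $n$. This is just the definition; no diagonal construction is needed.
\item Pick $p_n\in B\setminus S_n$ and put $\mu=\sum_n2^{-n}\delta_{p_n}$.
\item $\mu$ is a Radon measure on the single compact set $B$, so $\mu\in L'$.
\item For each $m$, $p_m\in\operatorname{supp}\mu\setminus S_m$, so $\mu\notin K_m$. This contradicts $L'=\bigcup_mK_m$.
\end{itemize}

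The step you feared, that each $S_n$ is compact, is fine and works as you sketch it. Suppose the union of the supports of the measures in the $\sigma(L',L)$-bounded set $K_n$ were not functionally bounded. A function unbounded on that union gives $\xi_k\in K_n$, points $x_k\in\operatorname{supp}\xi_k$, and a discrete sequence of open sets $U_k\ni x_k$ with $U_k\cap\operatorname{supp}\xi_j=\emptyset$ for $j<k$. Choose $g_k$ vanishing off $U_k$ with $\xi_k(g_k)\ne0$. Then the gliding-hump sum $f=\sum_kc_kg_k$, with the $c_k$ chosen inductively, is continuous and satisfies $|\xi_k(f)|\ge k$. This contradicts boundedness, so the union is functionally bounded. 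Its closure $S_n$ is then compact because $X$ is realcompact, which you already derived from Lemma~\ref{lem:Clambda}(c) (Fact~\ref{f:func}(6)).
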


\begin{proof}
(a)$\Rightarrow$(b) is trivial, and (b)$\Rightarrow$(c) is
\cite[Theorem~2.11 and Proposition~2.1]{KL22}. (c)$\Rightarrow$(a): for
$\lambda$ consisting of all compact sets, $\widehat A=A$; a hemicompact
$X$ is Lindel\"of, hence $X=\upsilon X$, and
Proposition~\ref{prop:Clambda}(b) applies. Proposition~\ref{prop:Clambda}(a)
also gives (a)$\Rightarrow$(c) directly.
\end{proof}

Thus, for $C_k(X)$ the condition is much weaker than for $C_p(X)$
(Proposition~\ref{prop:Cp}): for instance, $(C_k([0,1]))'_\sigma$ is
hemicompact while $L_p([0,1])$ is not; for $X=\mathbb N$ both spaces
coincide with $(\varphi,\sigma(\varphi,\R^{\mathbb N}))$; for
$X=\mathbb Q$ neither is hemicompact. For $C_p(X)$,
Proposition~\ref{prop:Clambda}(a) gives only the necessary condition
``$X$ is realcompact and hemicompact'', whereas the exact answer
(Proposition~\ref{prop:Cp}) is: $X$ is countable and all its compact
subsets are finite. For infinite $X$, a hemicompact $(C_k(X))'_\sigma$ is
not a $k$-space: by \cite{R-kd}, for a metrizable barrelled $L$ the space
$\Ls$ is a $k$-space only if $\dim L<\infty$.

The sufficient condition in Proposition~\ref{prop:Clambda}(b) is not
far from necessary in the extreme cases: for $\lambda$ consisting of all
compact sets it is also necessary (Corollary~\ref{cor:Ck}), and for
$\lambda$ consisting of finite sets and countable $X$ with only finite
compact subsets it holds as well, since then $X=\upsilon X$ is
hemicompact with respect to an increasing sequence of finite sets and
$C_p(X)=C_k(X)$.

\begin{question}\label{q:Clambda}
Let $\lambda$ consist of all functionally bounded subsets of $X$. Is it
true that $(C_\lambda(X))'_\sigma$ is hemicompact if and only if
$\upsilon X$ is hemicompact? What is the answer for an arbitrary
$\lambda$ in terms of $X$ and $\lambda$?
\end{question}

\section{Examples}

\begin{example}\label{ex:phi}
Let $\varphi=\bigoplus_{n}\R$ and $L=(\varphi,\sigma(\varphi,\varphi))$
with respect to the pairing $\langle x,y\rangle=\sum_ix_iy_i$. This
topology is induced from $\R^{\mathbb N}$ and is metrizable, i.e., $L$ is
the $\sigma$-product $\{x\in\R^{\mathbb N}:\ x_i\ne0$ for finitely many
$i\}$ in $\R^{\mathbb N}$; the same holds for $\Ls$. The absolutely convex
compact subsets of $(\varphi,\sigma(\varphi,\varphi))$ are
finite-dimensional (by the Baire theorem, since $\varphi=\bigcup_n\R^n$
with closed $\R^n$), hence $\tau(L,L')=\sigma(\varphi,\varphi)$ is
metrizable. However, $\Ls=(\varphi,\sigma(\varphi,\varphi))$ is not
hemicompact: the set $\{ne_n\}$ is bounded but not equicontinuous, i.e.,
$L$ is not barrelled (cf.\ Fact~\ref{f:known}(10)).
\end{example}

\begin{example}\label{ex:normed}
The normed space $L=(\varphi,\|\cdot\|_\infty)$ has countable dimension
and is not barrelled by Fact~\ref{f:known}(10); by
Theorem~\ref{thm:normed}, the space $\Ls=(\ell_1,\sigma(\ell_1,\varphi))$
is not hemicompact. Another example of a non-barrelled normed space is
$T(\ell_2)$ with the norm of $\ell_2$, where $T=\operatorname{diag}(1/k)$:
the set $T(B_{\ell_2})$ is absolutely convex and absorbs $T(\ell_2)$; it
is compact in $\ell_2$ and hence closed in $T(\ell_2)$, i.e., it is a
barrel; being compact in an infinite-dimensional normed space, it is not
a neighbourhood of zero.
\end{example}

\begin{example}\label{ex:prod}
\begin{enumerate}[label=\textup{(\arabic*)}]
\item Let $L=\prod_nE_n$, where the $E_n$ are normed. Then $L$ is
metrizable, $\Ls=\bigoplus_n(E_n)'_{w^*}$, and by Theorem~\ref{thm:metr}
and Fact~\ref{f:known}(11): $\Ls$ is hemicompact $\iff$ $L$ is barrelled
$\iff$ all $E_n$ are barrelled.
\item Let $L\subseteq\R^{\mathbb N}$ be a dense subspace. Then
$L'=\varphi$, and the equicontinuous sets are the bounded subsets of the
spaces $\R^n=\spn\{e_1,\dots,e_n\}$ (the polars of the neighbourhoods
$\{x\in L:|x_i|\le\varepsilon,\ i\le n\}$). Therefore
$(\varphi,\sigma(\varphi,L))$ is hemicompact $\iff$ $L$ is barrelled
$\iff$ all $\sigma(\varphi,L)$-bounded sets are finite-dimensional.
Indeed, the first equivalence is Theorem~\ref{thm:metr}. If $L$ is
barrelled, then bounded sets are equicontinuous and hence
finite-dimensional. Conversely, a finite-dimensional bounded set lies in
some $\R^n$, on which $\sigma(\varphi,L)$ is the Euclidean topology
(since the restrictions of elements of $L$ to $\R^n$ form an
$n$-dimensional space, $L$ being dense in $\R^{\mathbb N}$), and is
therefore equicontinuous; since polars of barrels are bounded
(Fact~\ref{f:known}(2)), every barrel is a neighbourhood of zero.
\end{enumerate}
\end{example}

\end{document}